\newcommand{\norma}[1]{{\left\vert\kern-0.25ex\left\vert\kern-0.25ex\left\vert #1
    \right\vert\kern-0.25ex\right\vert\kern-0.25ex\right\vert}}
\newcommand{\sech}{\text{sech}}
\newcommand{\bd}{\mathbf{d}}
\newcommand{\by}{\mathbf{y}}
\newcommand{\teta}{\tilde{\eta}}
\newcommand{\tu}{\tilde{u}}
\newcommand{\tw}{\tilde{w}}
\newcommand{\tv}{\tilde{v}}
\newcommand{\tf}{\tilde{f}}
\newcommand{\tg}{\tilde{g}}
\newcommand{\Alpha}{\mathrm{A}}
\newcommand{\Beta}{\mathrm{B}}
\newcommand{\bw}{\mathbf{w}}
\newcommand{\bF}{\mathbf{f}}
\newcommand{\bS}{\mathbf{S}}
\newcommand{\bchi}{\boldsymbol{\chi}}
\newcommand{\tbn}[1]{{\left\vert\kern-0.25ex\left\vert\kern-0.25ex\left\vert #1 \right\vert\kern-0.25ex\right\vert\kern-0.25ex\right\vert}}
\newcommand{\dd}{\,\mathrm{d}}
\newtheorem{remark}{Remark}[section]
\newtheorem{proposition}{Proposition}[section]
\begin{document}

\title[A conservative fully-discrete numerical method for the regularised shallow water wave equations]{A conservative fully-discrete numerical method for the regularised shallow water wave equations}

\author{Dimitrios Mitsotakis}
\address{\textbf{D.~Mitsotakis:} Victoria University of Wellington, School of Mathematics and Statistics, PO Box 600, Wellington 6140, New Zealand}
\email{dimitrios.mitsotakis@vuw.ac.nz}

\author{Hendrik Ranocha}
\address{\textbf{H.~Ranocha:} King Abdullah University of Science and Technology (KAUST), Computer Electrical and Mathematical Science and Engineering Division (CEMSE), Thuwal, 23955-6900, Saudi Arabia }
\email{mail@ranocha.de}

\author{David I. Ketcheson}
\address{\textbf{D.~I.~Ketcheson:} King Abdullah University of Science and Technology (KAUST), Computer Electrical and Mathematical Science and Engineering Division (CEMSE), Thuwal, 23955-6900, Saudi Arabia }
\email{david.ketcheson@kaust.edu.sa}

\author{Endre S\"{u}li}
\address{\textbf{E.~S\"{u}li:} University   of   Oxford, Mathematical   Institute,   Oxford   OX2   6GG,   UK }
\email{Endre.Suli@maths.ox.ac.uk}


\thanks{E. S\"uli is grateful to the School of Mathematics and Statistics of the Victoria University of Wellington for the kind hospitality during his visit as the year 2015 London Mathematical Society/New Zealand Mathematical Society Forder Lecturer. Research reported in this publication was supported by the
King Abdullah University of Science and Technology (KAUST)}

\subjclass[2000]{65M60, 65L06,76B25}

\date{\today}


\keywords{Boussinesq system, relaxation Runge--Kutta methods, 
mixed finite element methods}

\begin{abstract}
The paper proposes a new, conservative fully-discrete scheme for the numerical
solution of the regularised shallow water Boussinesq system of equations in the
cases of periodic and reflective boundary conditions. The particular system is one
of a class of equations derived recently and can be used in practical simulations
to describe the propagation of weakly nonlinear and weakly dispersive long water
waves, such as tsunamis. Studies of small-amplitude long waves usually require
long-time simulations in order to investigate scenarios such as the overtaking
collision of two solitary waves or the propagation of transoceanic tsunamis.
For long-time simulations of non-dissipative waves such as solitary waves, the
preservation of the total energy by the numerical method can be crucial in the
quality of the approximation. The new conservative fully-discrete method consists
of a Galerkin finite element method for spatial semidiscretisation and an
explicit relaxation Runge--Kutta scheme for integration in time. The Galerkin
method is expressed and implemented in the framework of mixed finite element
methods. The paper provides an extended experimental study of the accuracy and
convergence properties of the new numerical method. The experiments reveal a
new convergence pattern compared to standard Galerkin methods.
\end{abstract}

\maketitle

\section{Introduction}

In this paper we are concerned with the numerical solution of the
regularised shallow water system of Boussinesq equations, also known as the BBM-BBM
system.  The regularised shallow water equations
derived in \cite{BC1998,BCS2002} from the incompressible Euler equations for
free-surface flows may be written in dimensional  variables as
\begin{align}
\begin{aligned}\label{eq:bbmbbm}
\eta_t+[(D+\eta) u]_x-\frac{1}{6} D^2 \eta_{xxt}&=0,\\
u_t+g\eta_x+uu_x-\frac{1}{6}D^2 u_{xxt}&=0,
\end{aligned}
\end{align}
where $g$ is acceleration due to gravity, $D$ is the depth of the impenetrable horizontal sea floor, $\eta=\eta(x,t)$,
$u=u(x,t)$ are real functions defined at the horizontal location $x$ and time
$t\geq 0$ denoting the free-surface elevation of the water and the horizontal
velocity at water level $\sqrt{2/3}D$. The regularised shallow water system
describes the two-way propagation of long crested waves of small amplitude, and
particularly, respecting the physics of water waves, solutions of system
(\ref{eq:bbmbbm}) are weakly nonlinear and weakly dispersive. In that sense,
system (\ref{eq:bbmbbm}) generalises the non-dispersive shallow water system
\begin{align}\label{eq:shallowwater}
\begin{aligned}
\eta_t+[(D+\eta)u]_x&=0,\\
u_t + g\eta_x+uu_x&=0,
\end{aligned}
\end{align}
in the context of nonlinear and dispersive water waves. Both systems
(\ref{eq:bbmbbm}) and (\ref{eq:shallowwater}) conserve the same approximation
of the total energy (kinetic plus potential) in the interval $I=(-\infty,\infty)$, defined by
\begin{align}\label{eq:energy}
\mathcal{E} (t; \eta,u):= \frac{1}{2} \int_I g\eta^2+(D+\eta)u^2 \dd x,
\end{align}
in the sense that their solutions satisfy
\begin{align}\label{eq:energycons}
\frac{\dd}{\dd t} \mathcal{E}(t;\eta,u)=0.
\end{align}
In addition to the energy conservation (\ref{eq:energycons}) the solutions of
the regularised shallow water equations (\ref{eq:bbmbbm}) conserve the following
quantities

\begin{align}
\mathcal{M}(t;\eta)&:=\int_I \eta \dd x, \quad \text{(mass)} \label{eq:mass1},\\
\mathcal{I}(t; u)&:=\int_I u \dd x,  \quad \text{(momentum)} \label{eq:mass2},\\
{\mathcal H}(t; \eta,u)&:=\int_I \eta u+\frac{1}{6}D^2\eta_x u_x \dd x, \quad \text{(impulse)}.  \label{eq:h1}
\end{align}
The quantity $\mathcal{M}$ represents the total mass and $\mathcal{I}$ the momentum, while $\mathcal{H}$ can be thought of as a generalised energy that is usually referred to as the impulse functional \cite{Benj1984}.  There is a Hamiltonian structure that relates the quantities $\mathcal{H}$ and $\mathcal{E}$; namely
$\partial_t \nabla \mathcal{H}(t;\eta,u)+\partial_x\nabla \mathcal{E}(t;\eta,u)=0$. Equivalently, the system (\ref{eq:bbmbbm}) can be written in Hamiltonian form as
$$\partial_t(\eta,u)^T=J~\nabla \mathcal{E}(t;\eta,u),$$
where $$J=\begin{pmatrix}
0 & (I-D^2/6\partial_x^2)^{-1}\partial_x\\
(I-D^2/6\partial_x^2)^{-1}\partial_x & 0
\end{pmatrix},$$
with $\mathcal{E}$ playing the role of the Hamiltonian.
For more information about the conservation properties and their theoretical implications we refer to \cite{BCS2004}.

Existence and uniqueness of weak and classical solutions for the Cauchy problem
of system (\ref{eq:bbmbbm}) have been established only locally in time in
\cite{BC1998,BCS2002,BCS2004}, while global well-posedness
in time was proved in \cite{AABCW2006} under the condition that there is an $\alpha>0$ such that
the solution satisfies
$$D+\eta(x,t)\geq \alpha \quad \mbox{ for all }\quad t\geq 0.$$

Like the full Euler equations, the BBM-BBM system possesses classical solitary
wave solutions. These are waves that travel with constant phase speed and without
change in their shape. Usually, both $\eta$ and $u$ profiles of solitary waves
have the shape of a $\sech^2$-function \cite{Chen2000,BC1998,DM2008}.
There are no known analytical formulas for classical solitary
waves of the BBM-BBM system (\ref{eq:bbmbbm}), although
there are analytical formulas describing other travelling wave solutions of
(\ref{eq:bbmbbm}); see \cite{Chen1998}.  The latter travelling waves are unphysical, as
the value of $\eta$ is negative in places so that the water surface extends below the
impenetrable bottom. Other travelling wave solutions with analytical formulas
assume that there is no water in the domain. One such formula, which is
useful mainly for testing numerical methods, can be written in the special
case $D=1$, $g=1$ in the form
\begin{align}\label{eq:exactsol}
\begin{aligned}
\eta_{\pm}(x,t) & = \frac{15}{4}\left(\cosh\left(3\sqrt{\frac{2}{5}}\left(x\mp\frac{5}{2}t\right) \right) -2\right)\sech^4\left(\frac{3}{\sqrt{10}}\left(x\mp\frac{5}{2}t\right) \right),\\
u_{\pm}(x,t) &= \frac{15}{2} \sech^2 \left(\frac{3}{\sqrt{10}}\left(x\mp\frac{5}{2}t\right) \right).
\end{aligned}
\end{align}
This travelling wave solution was derived in \cite{Chen1998, Chen2000} and is
apparently unstable under even tiny perturbations, which cause a point blow-up
phenomenon \cite{BC2016}. For generalisations, physical derivation, and a review
of the theory and numerical analysis, we refer to \cite{DM2008, KDFM2018}.

For practical situations and for numerical computations the Cauchy problem for the system
(\ref{eq:bbmbbm}) is usually not very useful, since numerical methods and practical
problems are set in bounded domains. The theory for several initial-boundary value
problems in a bounded interval $I=[a,b]$ has been established \cite{BC1998,ADM2009}.
More specifically, given appropriately smooth initial data $\eta(x,0)=\eta_0(x)$
and $u(x,0)=u_0(x)$, the existence and uniqueness of classical and weak solutions
were proved for the following problems:
\begin{itemize}
\item Wave-maker boundary conditions (Dirichlet--Dirichlet): $\eta(a,t)=h_1(t)$, $\eta(b,t)=h_2(t)$ and $u(a,t)=v_1(t)$, $u(b,t)=v_2(t)$,
\item Reflection boundary conditions (Neumann--Dirichlet): $\eta_x(a,t)=\eta_x(b,t)=0$ and $u(a,t)=u(b,t)=0$,
\item Periodic boundary conditions (Periodic--Periodic): $\partial_x^i\eta(a,t)=\partial_x^i\eta(b,t)$ and $\partial_x^iu(a,t)=\partial_x^iu(b,t)$, for all $i\geq 0$,  where $\partial_x^i$ denotes the $i$-th order partial derivative with respect to $x$.
\end{itemize}

The conservation of the quantities $\mathcal{M}$, $\mathcal{I}$, $\mathcal{H}$ and $\mathcal{E}$ on bounded intervals depends upon the choice of the boundary conditions. For example, mass (\ref{eq:mass1}) and energy (\ref{eq:energy}) are conserved
with periodic and reflecting boundary conditions, but not with
wave-maker conditions.
\begin{table}[ht!]
\caption{Conservative quantities for different initial-boundary value problems} \label{tab:conserv}
\begin{tabular}{ccccc}
\hline
& $\mathcal{M}(t;\eta)$ & $\mathcal{I}(t;u)$ & $\mathcal{H}(t;\eta,u)$ & $\mathcal{E}(t;\eta,u)$\\ \hline
Cauchy & $\checkmark$ & $\checkmark$ & $\checkmark$ & $\checkmark$\\
Dirichlet &  &  & $\checkmark$ &    \\
Reflective & $\checkmark$ & & & $\checkmark$ \\
Periodic & $\checkmark$ & $\checkmark$ & $\checkmark$ & $\checkmark$ \\
\hline
\end{tabular}
\end{table}
On the other hand, the momentum (\ref{eq:mass2})
is conserved by the solutions of the periodic boundary value problem only.
Finally, the conservation of the impulse functional (\ref{eq:h1}) is only valid in the
case of periodic and homogenous Dirichlet boundary value problems. The
conservation properties of the various initial-boundary value problems are
summarised in Table \ref{tab:conserv}.

The conservation of mass $\mathcal{M}$, especially, is one of the basic requirements for acceptable numerical simulations. Lack of its preservation is related to large wave damping. Conservation of the momentum $\mathcal{I}$ comes as a secondary requirement; note that momentum is conserved for the Cauchy problem, but not in the presence of a reflecting boundary.  Typically, conventional methods preserve $\mathcal{M}$ and $\mathcal{I}$ (when the exact solution preserves them) without sophisticated modifications. The conservation of the impulse $\mathcal{H}$ can enhance the numerical results, but (like the momentum) it is not conserved in the presence of a reflecting boundary.  The impulse is also conserved by conventional numerical methods such as the standard Galerkin method and by geometric integrators. The total energy $\mathcal{E}$ is not conserved by conventional methods, and yet its conservation is very important for both physical and geometrical reasons as it is the Hamiltonian of the particular system \cite{BCS2002}.  Note that in the exact solution the total energy {\em is} conserved even in the presence of reflecting boundaries (cf. Table \ref{tab:conserv}).

Several methods have been devised for the numerical solution of relevant
initial-boundary value problems for the system (\ref{eq:bbmbbm}). These include,
for example, spectral \cite{XRVA2018, PD2001}, finite volume/difference
\cite{DKM2011,BC1998}  and Galerkin finite element methods
\cite{ADM2010, DMS2010ii}, where they have been analysed, tested and studied in
depth.   Some of these numerical methods appear to have good conservation and accuracy
properties but none of them has been designed to conserve the energy functional
(\ref{eq:energy}). 
Conservative fully-discrete schemes based on the multi-symplectic structure of symmetric versions of Boussinesq systems were studied in \cite{DDM2019i,DDM2019ii} where the energy functionals contain only
quadratic nonlinearities. The energy functional of the BBM-BBM system though
contains cubic nonlinearities and thus multi-symplectic methods cannot be applied
in a straightforward manner.

The spatial semidiscretisation of the BBM-BBM system (\ref{eq:bbmbbm})
usually leads to a non-stiff system of ordinary differential equations.
In the case of finite element methods, non-stiffness was proven in
\cite{ADM2010} while for other spatial discretisations it was apparent after numerical experimentation.
For this reason, explicit Runge--Kutta methods with favourable stability properties
are efficient for its integration in time. The disadvantage of the explicit
Runge--Kutta methods tested in the previously mentioned works is that they do not
conserve the energy functional. Therefore, the fully-discrete systems proposed
so far for the numerical solution of the system (\ref{eq:bbmbbm}) are not
conservative with the exception of the very recent work  \cite{RMK2020}, where
appropriate collocation methods were considered for the spatial discretisation
combined with recently proposed relaxation Runge--Kutta methods for integration
in time \cite{KE2019, RSDPK2020, RLK2020}.

Because conservation of energy can be important, especially in some long-time
simulations, our focus is on energy preserving fully-discrete schemes.
In this paper we present a new Galerkin finite element method
with Lagrange elements that conserves the energy functional (\ref{eq:energy}).
Although it has been shown that the standard Galerkin finite element method based on
high-order splines has excellent conservation properties, that method is immensely hard to
use in two space dimensions and again it does not preserve the total energy $\mathcal{E}$. On the other hand, Lagrange elements can be extended and used
easily in two space dimensions, and for this reason it is important to acquire
the appropriate background of such conservative methods.

The Galerkin semidiscretisation of the new method is expressed and implemented in
the framework of mixed finite element methods \cite{BoBreFor13}. The resulting
system of ordinary differential equations, which is apparently non-stiff, is
integrated in time using explicit relaxation Runge--Kutta schemes
\cite{KE2019, RSDPK2020, RK2020} that are specially designed as interpolation
Runge--Kutta methods \cite{Z1986} to respect the conservation of the energy of
systems of ordinary differential equations. The resulting full discretisation is
referred to in what follows as the conservative Galerkin method.

Due to the non-existence of analytical formulas for classical solitary waves, we
test the method using classical solitary wave profiles generated numerically
with the Petviashvili method \cite{Petv1976}. The Petviashvili
method in the context of finite element methods for Boussinesq systems was first
presented in \cite{KMS2020}. In this paper we revisit the particular method in
one space dimension and for the analogous initial-boundary value problems.

The structure of this paper is the following: In Section \ref{sec:sfem} we present
the standard and the conservative Galerkin finite element methods, and review their
conservation properties.
Section \ref{sec:rrk} is dedicated to the temporal discretisation of these Galerkin
methods with explicit Runge--Kutta methods.
A detailed study of the convergence of the conservative Galerkin
method is presented in Section \ref{sec:ecr}. This study reveals an unexpected
convergence behaviour of mixed Galerkin methods for the specific problem, not
previously detected in standard Galerkin method.  Due to the significant
importance of solitary waves, Section \ref{sec:exper} presents numerical
experiments related to the propagation, reflection and interaction of solitary
waves, showing the behaviour of the conservative Galerkin method.
For the sake of completeness we present the Petviashvili method in the context
of Galerkin finite element methods in Appendix \ref{sec:appendix}. Conclusions
and perspectives are summarised in Section \ref{sec:conclu}.

\section{Galerkin finite element methods}\label{sec:sfem}

\subsection{Notation}

Let $a=x_0<x_1<\cdots < x_{N} = b$ be a uniform partition of the interval
$\bar{I}=[a,b]$ with mesh length $\Delta x=x_{i+1}-x_i$. Given an integer
$r\geq 1$, we shall consider the   finite element spaces of Lagrange elements
\begin{align}\label{eq:spaces}
\mathcal{P}^r:=\{\phi\in C(\bar{I}): ~ \phi|_{[x_j,x_{j+1}]}\in\mathbb{P}^{r}, ~ 0\leq j\leq N-1\},
\end{align}
where $C(I)$ is the usual space of continuous functions on $I$.
Moreover, $\mathbb{P}^r$ denotes the space of polynomials of degree at most $r$.
We also consider the Lagrange finite element spaces
$$\mathcal{P}^r_0:=\{\phi\in\mathcal{P}^r~|~ \phi(a)=\phi(b)=0\},$$
and the space of periodic Lagrange elements
$$\mathcal{P}^r_p:=\{\phi\in\mathcal{P}^r~|~ \phi(a)=\phi(b)\}.$$
In what follows we will denote by $H^s(I)$ the usual Sobolev spaces of
$s$-order weakly differentiable functions, and their norm by $\|\cdot\|_s$.
It is well-known that the Lagrange finite element spaces are all finite-dimensional
subspaces of the space $H^1(I)$.
The space of square-integrable functions will be denoted
by $L^2(I)=H^0(I)$ with the associated norm $\| \cdot \|$. We will also denote the spaces $\mathcal{P}^r_0$ and
$\mathcal{P}^r_p$ as $\mathcal{P}^r_b$ where $b$ can be either $0$ or $p$,
depending on the choice of the boundary conditions of the problem at hand.

The $L^2$-projection (or orthogonal projection) of a function $u\in L^2(I)$
onto the space $\mathcal{P}^r_b$  is defined as the operator
$P^r_b:L^2\to \mathcal{P}^r_b$ such that
\begin{align}
(P^r_b [u],\phi)=(u,\phi),\quad\mbox{ for all } \phi\in \mathcal{P}^r_b,
\end{align}
where
$$(u,v):=\int_a^b u\cdot v ~\dd x,$$
is the usual $L^2$-inner product.

\subsection{Standard Galerkin semidiscretisations}\label{sec:sgal}

We consider here the semidiscretisation of the initial-boundary value problem
written in nondimensional but unscaled variables:
\begin{align}\label{eq:ibvp1}
\begin{aligned}
\eta_t+[(1+\eta)u]_x-\frac{1}{6}\eta_{xxt}&=0,\\
u_t + \eta_x + uu_x-\frac{1}{6}u_{xxt}&=0,\\
\eta(x,0)=\eta_0(x),\quad u(x,0)&=u_0(x),\\
\end{aligned}
\end{align}
with either reflecting boundary conditions (Neumann--Dirichlet problem)
\begin{align}\label{eq:bc1}
\eta_x(a,t)=\eta_x(b,t)=0 \mbox{ and } u(a,t)=u(b,t)=0,
\end{align}
or periodic boundary conditions (Periodic--Periodic problem)
\begin{align}\label{eq:bc2}
\partial_x^i \eta(a,t)=\partial_x^i \eta_x(b,t)=0 \mbox{ and } \partial_x^i u(a,t)=\partial_x^i u(b,t),\quad \mbox{ for all } i=0,1,2,\ldots.
\end{align}

The standard Galerkin semidiscretisation of (\ref{eq:ibvp1}), (\ref{eq:bc1}) is
defined as follows: Let $[0,T]$ be a  nonempty closed subinterval of the maximal interval of existence and
uniqueness of the solution of (\ref{eq:ibvp1}) in $H^1(I)$. We seek
$\teta:[0, T]\to \mathcal{P}^r$ and $\tu:[0,T]\to \mathcal{P}^r_0$ such that
\begin{alignat}{2}\label{eq:sd1}
\begin{aligned}
(\teta_t,\chi)+\frac{1}{6}(\teta_{xt},\chi_x)&=\left( (1+\teta)\tu,\chi_x\right), &&\quad \mbox{ for all } \chi\in \mathcal{P}^r, \\
(\tu_t,\psi)+\frac{1}{6}(\tu_{xt},\psi_x) &= \frac{1}{2}(\tu^2,\psi_x)+(\teta,\psi_x), &&\quad \mbox{ for all } \psi\in \mathcal{P}^r_0,
\end{aligned}
\end{alignat}
with initial conditions
\begin{align}
\teta(x,0)=P^r[\eta_0(x)], \quad \tu(x, 0)=P_0^r [u_0(x)] .
\end{align}

The semidiscretisation of the initial-periodic boundary value problem is very
similar, the only difference being that the finite element spaces should be
replaced by the periodic polynomial space $\mathcal{P}^r_p$. In particular,
the semidiscrete solutions of the periodic problem are functions $\teta:[0, T]\to \mathcal{P}^r_p$ and
$\tu:[0,T]\to \mathcal{P}^r_p$ such that
\begin{alignat}{2}\label{eq:sd2}
\begin{aligned}
(\teta_t,\chi)+\frac{1}{6}(\teta_{xt},\chi_x)&=\left( (1+\teta)\tu,\chi_x\right), &&\quad \mbox{ for all } \chi\in \mathcal{P}^r_p, \\
(\tu_t,\psi)+\frac{1}{6}(\tu_{xt},\psi_x) &= \frac{1}{2}(\tu^2,\psi_x)+(\teta,\psi_x), &&\quad \mbox{ for all } \psi\in \mathcal{P}^r_p,
\end{aligned}
\end{alignat}
with initial conditions
\begin{align}
\teta(x,0)=P^r_p[\eta_0(x)], \quad \tu(x, 0)=P^r_p [u_0(x)] .
\end{align}

Both semidiscretisations can be found in \cite{ADM2009, DMS2010ii, ADM2010} where optimal
order convergence was proven. In particular, the numerical solution of these
semidiscretisations is known to converge  with order $r+1$ and the following 
error bound was shown to hold:
\begin{align}
\|\teta(t)-\eta(t)\|+\|\tu(t)-u(t)\|\leq C\Delta x^{r+1}, \qquad
\mbox{for all $t \in [0,T]$},
\end{align}
for some constant $C$ independent of $\Delta x$ \cite{DMS2010ii,ADM2010}.  These
semidiscretisations guarantee the conservation of the mass functional
$\mathcal{M}(t,\teta)$ but not the energy functional (\ref{eq:energy}).  In the
case of periodic boundary conditions, these semidiscretisations also conserve the
momentum $\mathcal{I}(t,\tu)$ and the impulse functional $\mathcal{H}(t;\teta,\tu)$.

\subsection{A conservative semidiscretisation for the reflective-boundary value problem}\label{sec:fem1}

Consider now the following modified Galerkin semidiscretisation for the
initial-boundary value problem with reflecting boundary conditions
(\ref{eq:ibvp1}), (\ref{eq:bc1}).  Let $\teta:[0, T]\to \mathcal{P}^r$ and
$\tu:[0,T]\to \mathcal{P}^r_0$ be the solutions of the system
\begin{alignat}{2}\label{eq:csd1}
\begin{aligned}
(\teta_t,\chi)+\frac{1}{6}({P^r_0[\teta_{x}]}_t,\chi_x)&=\left( P^r_0[(1+\teta)\tu],\chi_x\right), &&\quad \mbox{ for all $\chi\in \mathcal{P}^r$}, \\
(\tu_t,\psi)+\frac{1}{6}({P^r[\tu_{x}]}_t,\psi_x) &= \frac{1}{2}(P^r[\tu^2],\psi_x)+(P^r[\teta],\psi_x), &&\quad \mbox{ for all $ \psi\in \mathcal{P}^r_0$},
\end{aligned}
\end{alignat}
given the initial conditions
\begin{align}\label{eq:ic1}
\teta(x,0)=P^r[\eta_0(x)], \quad \tu(x, 0)=P_0^r [u_0(x)] .
\end{align}
This modified Galerkin semidiscretisation is derived from the standard Galerkin method
after replacing the dispersive and nonlinear terms with appropriate
$L^2$-projections in the associated finite element spaces. The
semidiscretisation (\ref{eq:csd1}) preserves both conserved quantities of the original problem
as they are noted in Table \ref{tab:conserv}. Indeed, the following proposition holds. 
\begin{proposition}\label{prop:reflcons}
The solution $(\teta,\tu)$ of (\ref{eq:csd1}), (\ref{eq:ic1}) satisfies the following conservation laws for all $t\in[0,T]$:
\begin{enumerate}
\item[(i)] Conservation of mass
$$\frac{\dd}{\dd t}\mathcal{M}(t;\teta)=0;$$
\item[(ii)] Conservation of energy
$$\frac{\dd}{\dd t}\mathcal{E}(t;\teta,\tu)=0.$$
\end{enumerate}
\end{proposition}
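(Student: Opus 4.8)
Throughout I use that each $L^2$-projection onto a finite element space is self-adjoint and idempotent: $(P[w],v)=(w,P[v])$ for all $w,v\in L^2(I)$, and $P[\phi]=\phi$ whenever $\phi$ already lies in the range of $P$; this applies to both $P^r$ and $P^r_0$. Part (i) is then immediate: since $r\ge 1$ the constant function $1$ belongs to $\mathcal{P}^r$, so choosing $\chi=1$ in the first equation of (\ref{eq:csd1}) annihilates every term carrying $\chi_x$ and leaves $\frac{\dd}{\dd t}\mathcal{M}(t;\teta)=(\teta_t,1)=0$.

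For part (ii), with $g=D=1$ I differentiate to obtain $\frac{\dd}{\dd t}\mathcal{E}=(\teta_t,\teta+\tfrac12\tu^2)+(\tu_t,(1+\teta)\tu)$. Since $\teta_t\in\mathcal{P}^r$ and $\tu_t\in\mathcal{P}^r_0$, self-adjointness lets me replace the non-polynomial factors by their projections, so that $\frac{\dd}{\dd t}\mathcal{E}=(\teta_t,\Phi)+(\tu_t,\Psi)$ with $\Phi:=\teta+\tfrac12 P^r[\tu^2]\in\mathcal{P}^r$ and $\Psi:=P^r_0[(1+\teta)\tu]\in\mathcal{P}^r_0$ — precisely the right-hand data of (\ref{eq:csd1}). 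The temptation is to substitute $\Phi,\Psi$ directly as test functions, but the dispersive terms on the left-hand sides prevent this from isolating $(\teta_t,\Phi)$ and render the manipulation circular, exactly as for the continuous system, whose energy conservation reflects the skew-adjoint Hamiltonian operator $J$ rather than a bare integration by parts.

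To mirror that structure discretely I introduce the symmetric, coercive bilinear forms $a_h(v,w):=(v,w)+\tfrac16(P^r_0[v_x],w_x)$ on $\mathcal{P}^r$ and $b_h(v,w):=(v,w)+\tfrac16(P^r[v_x],w_x)$ on $\mathcal{P}^r_0$ (symmetry from self-adjointness of the projections, coercivity from $a_h(v,v)\ge\|v\|^2$), in terms of which the scheme reads $a_h(\teta_t,\chi)=(\Psi,\chi_x)$ and $b_h(\tu_t,\psi)=(\Phi,\psi_x)$. I then define discrete potentials $\alpha\in\mathcal{P}^r$ and $\beta\in\mathcal{P}^r_0$ by the Riesz identities $a_h(\alpha,\chi)=(\Phi,\chi)$ for all $\chi\in\mathcal{P}^r$ and $b_h(\beta,\psi)=(\Psi,\psi)$ for all $\psi\in\mathcal{P}^r_0$. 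Testing the first Riesz identity with $\teta_t$ and the scheme with $\alpha$, and using symmetry of $a_h$, gives $(\teta_t,\Phi)=a_h(\teta_t,\alpha)=(\Psi,\alpha_x)$; symmetrically $(\tu_t,\Psi)=(\Phi,\beta_x)$. Hence $\frac{\dd}{\dd t}\mathcal{E}=(\Psi,\alpha_x)+(\Phi,\beta_x)$.

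It remains to show this vanishes, which is where the placement of the projections becomes essential. Because $\Psi\in\mathcal{P}^r_0$ and $\Phi\in\mathcal{P}^r$ I may insert the projections freely and apply the Riesz definitions of $\beta$ and $\alpha$ with $\psi=P^r_0[\alpha_x]$ and $\chi=P^r[\beta_x]$, obtaining $\frac{\dd}{\dd t}\mathcal{E}=b_h(\beta,P^r_0[\alpha_x])+a_h(\alpha,P^r[\beta_x])$. Expanding the two forms, the dispersive parts combine into $\tfrac16\int_I\bigl(P^r_0[\alpha_x]\,P^r[\beta_x]\bigr)_x\dd x$, which vanishes because $P^r_0[\alpha_x]$ is zero at $x=a,b$, while the $L^2$ parts combine into $\int_I(\alpha\beta)_x\dd x$, which vanishes because $\beta$ is zero at $x=a,b$; therefore $\frac{\dd}{\dd t}\mathcal{E}=0$. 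The crux, and the main obstacle to anticipate, is precisely this final cancellation: it succeeds only because $\Psi$, $\beta$ and $P^r_0[\alpha_x]$ all lie in $\mathcal{P}^r_0$ and so vanish at the reflecting boundary, which is exactly why the dispersive and nonlinear terms are projected with $P^r_0$ in the first equation of (\ref{eq:csd1}) and with $P^r$ in the second. For the periodic problem the same computation applies verbatim with $\mathcal{P}^r_b=\mathcal{P}^r_p$ throughout, the endpoint contributions now cancelling by periodicity.
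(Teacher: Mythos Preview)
Your proof is correct, but it takes a genuinely different and more elaborate route than the paper. The paper never introduces the Riesz potentials $\alpha,\beta$ or inverts the discrete Helmholtz forms $a_h,b_h$; instead it defines the explicit fluxes $R:=-(1+\teta)\tu+\tfrac{1}{6}\teta_{xt}$ and $Q:=-\tfrac{1}{2}\tu^2-\teta+\tfrac{1}{6}\tu_{xt}$, notes that $(\teta_t,Q)+(\tu_t,R)=(\teta_t,P^r[Q])+(\tu_t,P^r_0[R])$ by self-adjointness, and then tests the two equations of (\ref{eq:csd1}) directly with $\chi=P^r[Q]$ and $\psi=P^r_0[R]$. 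The dispersive terms on the left-hand sides are absorbed into $R$ and $Q$ themselves, so the substitution yields $(\teta_t,Q)+(\tu_t,R)=-(P^r_0[R],P^r[Q]_x)-(P^r[Q],P^r_0[R]_x)=0$, while a second direct computation gives $(\teta_t,Q)+(\tu_t,R)=-\tfrac{\dd}{\dd t}\mathcal{E}$. Your approach instead unwinds the dispersive operator through a discrete inverse, which makes the skew-adjointness of the underlying Hamiltonian structure $J$ explicit at the semidiscrete level and could transfer more cleanly to other Hamiltonians, but at the cost of invoking coercivity and Riesz representation that the paper's two-line substitution avoids entirely.
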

\begin{proof}
The conservation of mass (i) can be obtained by taking $\chi=1$ in (\ref{eq:csd1}).
To prove (ii), let
\begin{align}
& R:=-(1+\teta)\tu+\frac{1}{6}\teta_{xt},\\
& Q:=-\frac{1}{2}\tu^2-\teta+\frac{1}{6}\tu_{xt}.
\end{align}
Then we have
$$
\begin{aligned}
(\teta_t,Q)+(\tu_t,R) &= (\teta_t,P^r[Q])+(\tu_t, P^r_0[R]) \\
&= -(P^r_0[R],P^r[Q]_x)-(P^r[Q],P^r_0[R]_x) \\
&= 0.
\end{aligned}
$$
Furthermore,
$$
\begin{aligned}
(\teta_t,Q)+(\tu_t,R) &= (\teta_t,-\frac{1}{2}\tu^2-\teta+\frac{1}{6}\tu_{xt})+(\tu_t,-(1+\teta)\tu+\frac{1}{6}\teta_{xt})\\
&= \frac{1}{6}((\teta_t\tu_t)_x,1)-(\teta_t,\frac{1}{2}\tu^2+\teta)-(\tu_t,(1+\teta)\tu)\\
&= -\frac{1}{2}\left(\frac{\dd}{\dd t} \teta^2,1 \right)-\frac{1}{2}\left(\frac{\dd}{\dd t}\teta,\tu^2\right)-\frac{1}{2}\left(\frac{\dd}{\dd t}\tu^2, 1+\teta\right)\\
&=  -\frac{1}{2}\left(\frac{\dd}{\dd t}( \teta^2+(1+\teta)\tu^2),1\right)\\
&= -\frac{1}{2}\frac{\dd}{\dd t}\int_a^b \teta^2+(1+\teta)\tu^2\dd x .
\end{aligned}
$$
In the previous calculations we used the fact that $\tu\in \mathcal{P}^r_0$ and thus $\tu(a)=\tu(b)=0$, and the result follows.
\end{proof}

\begin{remark}
The projection in the first (mass) equation of (\ref{eq:csd1}) does not
need to be onto $\mathcal{P}_0^r$ but may instead be taken onto $\mathcal{P}^r$ without any
complications. The advantage of projection onto $\mathcal{P}^r_0$ is that
the Neumann boundary conditions of the free-surface elevation are guaranteed to be satisfied.
\end{remark}

The semidiscretisation (\ref{eq:csd1}) appears to be complicated, especially if
one considers its implementation. An alternative mixed finite element formulation
can resolve this difficulty by introducing new auxiliary variables as follows.
Let  $\teta:[0, T]\to \mathcal{P}^r$ and $\tu:[0,T]\to \mathcal{P}^r_0$ as before.
In addition we consider  $\tw, \tf:[0, T]\to \mathcal{P}^r_0$ and $\tv,\tg:[0,T]\to \mathcal{P}^r$ such that
\begin{alignat}{2}\label{eq:mixed1}
\begin{aligned}
(\teta_t,\chi)+\frac{1}{6}(\tw_t,\chi_x)&=(\tilde{f},\chi_x), &&\quad \mbox{ for all } \chi\in \mathcal{P}^r, \\
\frac{1}{6}({\teta_{x}}_t,\phi)- \frac{1}{6}(\tw_t,\phi)&=0, &&\quad \mbox{ for all } \phi\in\mathcal{P}^r_0 ,\\
(\tf,\zeta)&=((1+\teta)\tu,\zeta), &&\quad \mbox{ for all } \zeta\in\mathcal{P}^r_0 , \\
(\tu_t,\psi)+\frac{1}{6}(\tv_t,\psi_x) &= (\tg,\psi_x), &&\quad \mbox{ for all } \psi\in \mathcal{P}^r_0, \\
\frac{1}{6}({\tu_{x}}_t,\xi)-\frac{1}{6}(\tv_t,\xi)&=0, &&\quad \mbox{ for all } \xi\in\mathcal{P}^r , \\
(\tg,\theta)&=\left(\frac{1}{2}\tu^2+\teta,\theta\right), &&\quad \mbox{ for all } \theta\in\mathcal{P}^r ,
\end{aligned}
\end{alignat}
accompanied with the initial conditions
\begin{align}\label{eq:bcmixed1}
\teta(x,0)=P^r[\eta_0(x)], \quad \tw(x,0)=P^r_0[{\eta_0}_x(x)], \quad \tu(x, 0)=P^r_0 [u_0(x)], \quad \tv(x, 0)=P^r [{u_0}_x(x)] .
\end{align}
The formulations (\ref{eq:csd1}) and (\ref{eq:mixed1}) are equivalent, but
in the mixed formulation the $L^2$-projections are written
explicitly as unknowns. This is a characteristic that
makes mixed formulations very successful \cite{BoBreFor13}.
Note that the coefficients $1/6$ in the second and fifth equations have been
introduced so that the respective bilinear form resembles a saddle point problem, the solvability of which has been analysed in \cite{BoBreFor13}.

\subsection{A conservative semidiscretisation for the periodic boundary value problem}\label{sec:fem2}

Now we state a modified Galerkin
semidiscretisation for the initial-periodic boundary value problem
(\ref{eq:ibvp1})--(\ref{eq:bc2}).  Let $\teta:[0, T]\to \mathcal{P}^r_p$ and
$\tu:[0,T]\to \mathcal{P}^r_p$ be the solutions of the system
\begin{alignat}{2}\label{eq:csd2}
\begin{aligned}
(\teta_t,\chi)+\frac{1}{6}({P^r_p[\teta_{x}]}_t,\chi_x)&=( P^r_p[(1+\teta)\tu],\chi_x), &&\quad \mbox{ for all } \chi\in \mathcal{P}^r_p, \\
(\tu_t,\psi)+\frac{1}{6}({P^r_p[\tu_{x}]}_t,\psi_x) &= \frac{1}{2}(P^r_p[\tu^2],\psi_x)+(P^r_p[\teta],\psi_x), &&\quad \mbox{ for all } \psi\in \mathcal{P}^r_p,
\end{aligned}
\end{alignat}
given the initial conditions
\begin{align}\label{eq:ic2}
\teta(x,0)=P^r_p[\eta_0(x)], \quad \tu(x, 0)=P^r_p [u_0(x)] .
\end{align}
Similarly to the reflection case the following proposition holds in the case of periodic boundary conditions:
\begin{proposition}\label{prop:percons}
The solution $(\teta,\tu)$ of (\ref{eq:csd2}), (\ref{eq:ic2}) satisfies the following conservation laws for all $t\in[0,T]$:
\begin{enumerate}
\item[(i)] Conservation of mass 
$$\frac{\dd}{\dd t}\mathcal{M}(t;\teta)=0;$$
\item[(ii)] Conservation of momentum
$$\frac{\dd}{\dd t}\mathcal{I}(t;\tu)=0;$$
\item[(iii)] Conservation of energy
$$\frac{\dd}{\dd t}\mathcal{E}(t;\teta,\tu)=0.$$
\end{enumerate}
\end{proposition}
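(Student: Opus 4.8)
The plan is to follow the structure of the proof of Proposition \ref{prop:reflcons} as closely as possible, adapting it to the periodic setting. For the conservation of mass (i), I would take $\chi=1$ in the first equation of (\ref{eq:csd2}); since $1\in\mathcal{P}^r_p$ (a constant is trivially periodic) and $\chi_x=0$, both the dispersive projection term and the flux term on the right vanish, leaving $\frac{\dd}{\dd t}\int_a^b\teta\,\dd x=0$. For the conservation of momentum (ii), which is new relative to the reflective case, I would test the \emph{second} equation of (\ref{eq:csd2}) with $\psi=1\in\mathcal{P}^r_p$; again $\psi_x=0$ kills every term except $(\tu_t,1)$, yielding $\frac{\dd}{\dd t}\int_a^b\tu\,\dd x=0$. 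The reason momentum is conserved here but not in the reflective case is precisely that $1\notin\mathcal{P}^r_0$, so this test function is unavailable under reflecting boundary conditions.

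The energy conservation (iii) is the substantive part and would mirror the reflective proof, but with a crucial simplification. I would set
$$
R:=-(1+\teta)\tu+\frac{1}{6}\teta_{xt},\qquad
Q:=-\frac{1}{2}\tu^2-\teta+\frac{1}{6}\tu_{xt},
$$
exactly as before. The first computation, showing $(\teta_t,Q)+(\tu_t,R)=0$, proceeds by replacing $Q$ and $R$ with their projections $P^r_p[Q]$ and $P^r_p[R]$ (legitimate since $\teta_t,\tu_t\in\mathcal{P}^r_p$), then using the two weak equations of (\ref{eq:csd2}) to rewrite the inner products as $-(P^r_p[R],P^r_p[Q]_x)-(P^r_p[Q],P^r_p[R]_x)$. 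The periodic analogue of the integration-by-parts cancellation then applies: for any $f,g\in\mathcal{P}^r_p\subset H^1(I)$ one has $(f,g_x)+(g,f_x)=\int_a^b(fg)_x\,\dd x=[fg]_a^b=0$ by periodicity, so the sum vanishes.

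The second computation reproduces the algebraic identity of the reflective proof, expanding $(\teta_t,Q)+(\tu_t,R)$ directly from the definitions of $Q$ and $R$ and collecting the dispersive terms into $\frac{1}{6}((\teta_t\tu_t)_x,1)$. Here lies the one point deserving care, and the step I expect to be the main (though mild) obstacle: in the reflective argument this dispersive boundary term was discarded using $\tu(a)=\tu(b)=0$, but that boundary condition is unavailable in the periodic case. Instead I would note that $\teta_t,\tu_t\in\mathcal{P}^r_p$ are periodic, hence $\teta_t\tu_t$ is periodic and $\frac{1}{6}((\teta_t\tu_t)_x,1)=\frac{1}{6}[\teta_t\tu_t]_a^b=0$ by periodicity rather than by a homogeneous Dirichlet condition. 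After this, the remaining product-rule manipulations are identical, collapsing the expression to $-\frac{1}{2}\frac{\dd}{\dd t}\int_a^b\teta^2+(1+\teta)\tu^2\,\dd x$. Combining the two computations gives $\frac{\dd}{\dd t}\mathcal{E}(t;\teta,\tu)=0$, completing the proof.
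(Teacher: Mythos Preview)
Your proposal is correct and follows exactly the approach the paper takes: for (i) and (ii) the paper simply says to set $\chi=1$ and $\psi=1$, and for (iii) it refers the reader back to the proof of Proposition~\ref{prop:reflcons}, which you have faithfully reproduced with the appropriate replacement of the Dirichlet boundary cancellation by periodicity. Your observation that the dispersive boundary term $\frac{1}{6}[\teta_t\tu_t]_a^b$ now vanishes by periodicity rather than by $\tu(a)=\tu(b)=0$ is precisely the adaptation needed, and the integration-by-parts identity $(f,g_x)+(g,f_x)=0$ for $f,g\in\mathcal{P}^r_p$ is likewise the correct periodic analogue.
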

\begin{proof}
The proofs of (i) and (ii) are immediate by using $\chi=1$ and $\psi=1$ in
(\ref{eq:csd2}). The proof of (iii) follows exactly the same steps as in
Proposition \ref{prop:reflcons} and is omitted.
\end{proof}
\begin{remark}
The conservation of $\mathcal{H}(t;\teta,\tu)$ does not hold in the periodic case.
A numerical investigation of the accuracy of this conservation law will be one
of the subjects of the following sections.
\end{remark}

The mixed formulation with periodic conditions (\ref{eq:csd2}) is very similar
to that with reflective boundary conditions. Let  $\teta:[0, T]\to \mathcal{P}^r_p$ and
$\tu:[0,T]\to \mathcal{P}^r_p$ as before. In addition we consider
$\tw, \tf:[0, T]\to \mathcal{P}^r_p$ and $\tv,\tg:[0,T]\to \mathcal{P}^r_p$ such that
\begin{alignat}{2}\label{eq:mixed2}
\begin{aligned}
(\teta_t,\chi)+\frac{1}{6}(\tw_t,\chi_x)&=(\tilde{f},\chi_x), &&\quad \mbox{ for all } \chi\in \mathcal{P}^r_p, \\
\frac{1}{6}({\teta_{x}}_t,\phi)-\frac{1}{6}(\tw_t,\phi)&=0, &&\quad \mbox{ for all } \phi\in\mathcal{P}^r_p ,\\
(\tf,\zeta)&=((1+\teta)\tu,\zeta), &&\quad \mbox{ for all } \zeta\in\mathcal{P}^r_p , \\
(\tu_t,\psi)+\frac{1}{6}(\tv_t,\psi_x) &= (\tg,\psi_x), &&\quad \mbox{ for all } \psi\in \mathcal{P}^r_p, \\
\frac{1}{6}({\tu_{x}}_t,\xi)-\frac{1}{6}(\tv_t,\xi)&=0, &&\quad \mbox{ for all } \xi\in\mathcal{P}^r_p , \\
(\tg,\theta)&=\left(\frac{1}{2}\tu^2+\teta,\theta\right), &&\quad \mbox{ for all } \theta\in\mathcal{P}^r_p ,
\end{aligned}
\end{alignat}
accompanied with the initial conditions
\begin{align}\label{eq:bcmixed2}
\teta(x,0)=P^r_p[\eta_0(x)], \quad \tw(x,0)=P^r_p[{\eta_0}_x(x)], \quad \tu(x, 0)=P^r_p [u_0(x)], \quad \tv(x, 0)=P^r_p [{u_0}_x(x)] .
\end{align}

\section{Temporal discretisation}\label{sec:rrk}

Systems (\ref{eq:mixed1}) and (\ref{eq:mixed2}) are autonomous systems of
ordinary differential equations which, as we shall see later, are non-stiff. We
proceed with their temporal discretisation, which is chosen so that the energy
functional $\mathcal{E}$ is conserved in both cases. The time-integration methods
that we will use are known as relaxation Runge--Kutta methods (RRK)
and were introduced recently in \cite{KE2019, RSDPK2020} and studied in detail
in the case of Hamiltonian systems in \cite{RMK2020}. We proceed by providing a brief description of relaxation Runge--Kutta methods methods.

A system of ordinary differential equations, such as (\ref{eq:mixed1}) or (\ref{eq:mixed2}), can be written
\begin{align}
\begin{aligned}
\frac{\dd}{\dd t} \by(t) &= \bF(t,\by(t)), \quad t\in(0,T],\\
\by(0)&=\by_0 ,
\end{aligned}
\end{align}
where $\by(t)=[\by_i(t)]$ denotes the unknown vector function.
We consider a timestep $0<\Delta t<1$ and a uniform grid $0=t_0<t_1<\cdots <t_K=T$ with $t_{i+1}=t_i+(i+1)\Delta t$ for $i=0,1,\ldots,K-1$.  A general explicit Runge--Kutta method with $s$ stages can be fully described by its Butcher tableau
\renewcommand\arraystretch{1.2}
\begin{align}\label{eq:butcher}
\begin{array}{c|c}
c & A\\
\hline
& b^\mathrm{T}
\end{array},
\end{align}
where $A=[a_{ij}]_{i,j=1}^s$ is an $s\times s$ lower-triangular matrix with zeros
in the principal diagonal, and $b=[b_j]_{j=1}^s$ and $c=[c_j]_{j=1}^s$ are
$s$-dimensional vectors. If $\by^n$ stands for an approximation of $\by(t^n)$,
then an explicit Runge--Kutta method given by a Butcher tableau (\ref{eq:butcher})
can be expressed as
\begin{align}
\tilde{\by}^i&=\by^n+\Delta t\sum_{j=1}^{i-1} a_{ij}\,\bF(t_n+c_j\Delta t, \tilde{\by}^j), \quad i=1,2,\ldots,s, \label{eq:stages}\\
\by(t_n+\Delta t)\approx \by^{n+1}&=\by^n+\Delta t\sum_{i=1}^sb_i \,\bF(t_n+c_i\Delta t,\tilde{\by}^i). \label{eq:updates}
\end{align}
If
$$\bd^n=\sum_{i=1}^s b_i \,\bF_i,$$
with $\bF_i=\bF(t_n+c_i\Delta t,\tilde{\by}^i)$, then the respective
relaxation Runge--Kutta method is formulated by the replacement of the update
formula (\ref{eq:updates}) with an update in the same direction as the previous
formula but of a different length:
\begin{align}\label{eq:rrkupdate}
\by(t_n+\gamma^n\Delta t)\approx \by_\gamma^{n+1}=\by^n+\gamma^n\Delta t\ \bd^n,
\end{align}
where $\by^n=\by^{n}_\gamma$, for $n=1,2,\ldots$.
The parameter $\gamma^n$ is called the relaxation parameter and is defined
appropriately for all $n$ in order for the method to satisfy certain properties.
For example, we can choose $\gamma^n$ so that
$$\mathcal{E}(t^{n}+\gamma^n\Delta t; \by^{n+1}_\gamma)=\mathcal{E}(t^n;\by^n),$$
where $\mathcal{E}$ is the energy functional (\ref{eq:energy}).
Note that the prescribed time increment $\Delta t$ is replaced by $\gamma^n \Delta t$ in
the solution update formula \eqref{eq:updates} to obtain \eqref{eq:rrkupdate}, but the original $\Delta t$
is still used in the computation of the stages \eqref{eq:stages}.  As long as the prescribed
time step is admissible from the point of view of stability, the new step size $\Delta t^n_\gamma$ will be very close to the prescribed value $\Delta t$.

In our case we have $\by^n=(H^n, W^n, U^n, V^n)^\mathrm{T}$, where
$H^n$, $W^n$, $U^n$ and $V^n$ denote the fully discrete approximations in the appropriate
finite element spaces of $\teta(\cdot,t^n)$, $\tw(\cdot,t^n)$, $\tu(\cdot,t^n)$
and $\tv(\cdot,t^n)$, respectively.
Denote also the fully discrete approximations
of (\ref{eq:rrkupdate}) $\teta(\cdot,t^n+\gamma^n\Delta t)$,
$\tw(\cdot,t^n+\gamma^n\Delta t)$, $\tu(\cdot,t^n+\gamma^n\Delta t)$ and
$\tv(\cdot,t^n+\gamma^n\Delta t)$ in the appropriate finite element spaces by
$H^{n+1}_\gamma$, $W^{n+1}_\gamma$, $U^{n+1}_\gamma$ and $V^{n+1}_\gamma$,
respectively. Then, we can express the updates in $H^n$ and $U^n$ as
\begin{align}\label{eq:nupdt}
\begin{aligned}
H^{n+1}_\gamma &= H^n+\gamma^n\Delta t\ \bd^n_\eta,\\
U^{n+1}_\gamma &= U^n+\gamma^n\Delta t\ \bd^n_u,
\end{aligned}\quad \mbox{ for } n=0,1,2,\ldots,
\end{align}
where $\bd^n_\eta$ is the part of $\bd^n$ corresponding to the respective
solution $H^n$ and  $\bd^n_u$ is the part of $\bd^n$ corresponding to the
respective solution $U^n$. To devise a conservative time integration scheme,
we choose $\gamma^n$ to be such that
\begin{align}\label{eq:disenergy}
\mathcal{E}(t^{n}+\gamma^n\Delta t;H^{n+1}_\gamma,U^{n+1}_\gamma)=\mathcal{E}(t^n;H^n,U^n).
\end{align}
The parameter $\gamma^n$ can be determined by solving the algebraic equation (\ref{eq:disenergy}) for each $n$. Substitution of (\ref{eq:nupdt}) into (\ref{eq:disenergy}) leads to the following roots: $\gamma_n^0=0$ or
\begin{align}\label{eq:gamman1}
\gamma_n^\pm \cdot \Delta t =\frac{-\Beta\pm\sqrt{\Beta^2-4\Alpha\Gamma}}{2\Alpha},
\end{align}
where
\begin{align}\label{eq:gamman2}
\begin{aligned}
\Alpha&:= \int_a^b\bd^n_\eta(\bd^n_u)^2\dd x,\\
\Beta&:= \int_a^b (\bd^n_\eta)^2+(1+H^n)(\bd^n_u)^2+2U^n\bd^n_\eta \bd^n_u \dd x, \\
\Gamma&:= \int_a^b[2H^n+(U^n)^2]\bd^n_\eta+2 U^n(1+H^n)\bd^n_u\dd x.
\end{aligned}
\end{align}
Assume that our explicit Runge--Kutta method has global error of order $\Delta t^p$.  According to \cite{RSDPK2020,RLK2020} it is expected that there is a root $\gamma^n$ such that
\begin{equation}\label{eq:bounds1}
|\gamma^n-1|=O(\Delta t^{p-1})\ .
\end{equation}
This means that by choosing appropriate $\Delta t\ll 1$ we expect $\gamma^n$ to be close to $1$. Unfortunately, the floating-point evaluation of analytical expressions for the roots of the cubic polynomial
equation (\ref{eq:disenergy}) using (\ref{eq:gamman1}) can be affected by
catastrophic cancellation, and thus the numerical
approximation of these roots is suggested. In our case we use the the secant method
for their approximation (with convergence tolerance $10^{-10}$ for the absolute error). Alternative root finding procedures for relaxation methods are
discussed in \cite{RDP2020,RMK2020}.
The temporal discretisation of system (\ref{eq:mixed1}) is presented in
Algorithm \ref{alg:timem}, while the time-marching scheme for the periodic
system (\ref{eq:mixed2}) is very similar; for this reason, we omit its detailed
description.

\begin{algorithm}
\begin{algorithmic}
\State Given the timestep $\Delta t$ and interval of integration $[0,T]$
\State $t^n=0$
\State $H^0=P^r[\eta_0(x)]$ and $U^0=P^r_0[u_0(x)]$
\While{$t^n\leq T$}
\For{$i =1,2,\ldots, s$}
\State $\tilde{H}^i=H^n+\Delta t\sum_{j=1}^{i-1}a_{ij}H^{n,j}$
\State $\tilde{U}^i=U^n+\Delta t\sum_{j=1}^{i-1}a_{ij}U^{n,j}$
\State $\tilde{f}=P^r_0 [(1+\tilde{H}^i)\tilde{U}^i]$
\State $\tilde{g}=P^r \left[\frac{1}{2}(\tilde{U}^i)^2 +\tilde{H}^i\right]$
\State $\left\{ \begin{array}{ll} (H^{n,i},\chi)+\frac{1}{6}(W^{n,i},\chi_x)=(\tilde{f},\chi_x), & \forall \chi \in\mathcal{P}^r, \\
\frac{1}{6}(H^{n,i}_x,\phi)-\frac{1}{6}(W^{n,i},\phi)=0, & \forall \phi \in\mathcal{P}^r_0,\end{array}\right.$ evaluated at $t^{n,i}=t^{n}+c_i\Delta t$
\State $\left\{ \begin{array}{ll} (U^{n,i},\psi)+\frac{1}{6}(V^{n,i},\psi_x)=(\tilde{g},\psi_x), &  \forall \psi \in\mathcal{P}^r_0, \\
\frac{1}{6}(U^{n,i}_x,\xi)-\frac{1}{6}(V^{n,i},\xi)=0, &  \forall \xi \in\mathcal{P}^r,  \end{array}\right.$ evaluated at $t^{n,i}=t^{n}+c_i\Delta t$
\EndFor
\State $\bd^n_\eta=\sum_{i=1}^s{b_i}H^{n,i} $
\State $\bd^n_u=\sum_{i=1}^s{b_i}U^{n,i} $
\State Compute $\gamma^n$ by solving numerically (\ref{eq:disenergy})
\State $H^{n+1}=H^n+\gamma^n\Delta t \bd^n_\eta$
\State $U^{n+1}=U^n+\gamma^n\Delta t \bd^n_u$
\State $t^{n+1}=t^n+\gamma^n\Delta t$
\State $n\gets n+1$
\EndWhile
\end{algorithmic}
\caption{Conservative fully-discrete scheme for the regularised shallow water equations}\label{alg:timem}
\end{algorithm}

\begin{remark} Taking $\chi=1$ in Algorithm \ref{alg:timem} we observe that
$$\int_a^b H^{n,i} \dd x=0, \quad \mbox{for all } i=1,2,\ldots,s \quad \mbox{ and }\quad  n=0,1,2,\ldots,$$ and thus
$$
\int_a^b\bd^n_\eta \dd x=0, \quad \mbox{for all}\quad n=0,1,2,\ldots .
$$
Therefore, the fully-discrete scheme in addition to the energy $\mathcal{E}(t;H^n,U^n)$ preserves also the mass $\mathcal{M}(t;H^{n})$, as expected.
\end{remark}

Apparently, the resulting systems of ordinary differential equations at
(\ref{eq:sd1}) and (\ref{eq:sd2}) are not stiff and thus explicit Runge--Kutta
methods can be efficient in long-time numerical simulations. The new
mixed finite element semidiscretisations result also in non-stiff systems of
ordinary differential equations and thus in this paper we use the
conservative relaxation methods of \cite{KE2019,RSDPK2020,RK2020} for the
integration in time of (\ref{eq:mixed1}) and (\ref{eq:mixed2}).  Note
that in our case the semidiscrete systems are autonomous, meaning that
$\bF(t,\by)=\bF(\by)$, but we consider here the more general case to include
the possibility of inhomogeneous equations and the consideration of other
source terms.

\section{Experimental convergence rates and accuracy estimates}\label{sec:ecr}

In this section we study experimentally the convergence and accuracy of the new
numerical scheme. First, we study the spatial errors and convergence rates.
In order to complete the experimental study of the error analysis we perform a
detailed study using certain error indicators pertinent to the propagation of
travelling wave solutions. For the temporal discretisation of both
initial-boundary value problems we use a relaxation Runge--Kutta method
related to the classical four-stage, fourth-order Runge--Kutta method given by
the Butcher tableau:
\begin{align}\label{eq:RK4}
\begin{array}{c|c}
c & A\\
\hline
& b^\mathrm{T}
\end{array} ~=~ \begin{array}{c|cccc}
0 &  0 & 0 & 0  & 0  \\
1/2 &  1/2 & 0  & 0 & 0 \\
1/2 & 0 & 1/2 & 0 & 0 \\
1 & 0 & 0 & 1 & 0 \\
\hline
& 1/6 & 1/3 & 1/3 & 1/6
\end{array}.
\end{align}
For the numerical implementation of the fully-discrete schemes we employed the library Fenics \cite{Fenics}.

\subsection{Spatial convergence rates for the Neumann--Dirichlet problem}

We consider the initial-boundary value problem  (\ref{eq:ibvp1}), (\ref{eq:bc1}) in the interval $[a,b]=[0,1]$.
Due to the lack of exact analytical solutions for the specific problem, we consider the artificial solution of the form
\begin{align}
\begin{aligned}
\eta(x,t)=\mathrm{e}^{2t}\cos(\pi x),\\
u(x,t)=\mathrm{e}^tx\sin(\pi x),
\end{aligned}
\end{align}
for $x\in[0,1]$ and $t\in[0,1]$. This particular solution satisfies a
non-homogenous initial-boundary value problem of the form
(\ref{eq:ibvp1})--(\ref{eq:bc2}) with appropriate right-hand sides in both the
mass and momentum conservation equations. Since the energy is not conserved under these
non-homogenous equations, we use the classical
Runge--Kutta method of order four (without relaxation) with small timestep $\Delta t= \Delta x/10$ to
ensure that the numerical errors introduced by the time integration are
negligible compared to the errors due to the spatial discretisation.

We study the spatial convergence of the fully-discrete scheme using the errors defined as
\begin{align}\label{eq:errorf}
E_s[F]:=\|F(\cdot,T;\Delta x)-F_{\text{exact}}(\cdot,T)\|_s,
\end{align}
where $F=F(\cdot;\Delta x)$ is the computed solution in $\mathcal{P}^r$, i.e. either $H\approx \eta(\cdot,T)$ or $U\approx u(\cdot,T)$, $F_{\text{exact}}$ is the corresponding exact solution and $\|\cdot\|_s$ with $s=0,1$ corresponds to the $L^2$ and $H^1$ norms, respectively. In addition to these standard errors, we also compute the errors of the quantities $W\approx w(\cdot,T)=\eta_x(\cdot,T)$ and $V\approx v(\cdot,T)=u_x(\cdot,T)$. These errors are practically $H^1$-norm based errors since the new numerical method computes approximations of the first derivatives $\eta_x$ and $u_x$ at every time step. In particular we compute the error
\begin{align}\label{eq:modnorm}
\begin{aligned}
\tilde{E}_1[H]&:=\sqrt{\|H(\cdot,T;\Delta x)-\eta(\cdot,T)\|^2+\|W(\cdot,T;\Delta x)-\eta_x(\cdot,T)\|^2}, \\
\tilde{E}_1[U]&:=\sqrt{\|U(\cdot,T;\Delta x)-u(\cdot,T)\|^2+\|V(\cdot,T;\Delta x)-u_x(\cdot,T)\|^2}.
\end{aligned}
\end{align}
Throughout this work we use quadrature rules that are exact for the polynomial
integrands, with the choice of quadrature depending on the finite element
spaces used for the spatial discretisation.

We compute the fully-discrete numerical solutions $H$ and $U$ until $T=1$, and the errors (\ref{eq:errorf}) when $\mathcal{P}^r$ is with $r=1,2,3$ and $4$, and for
\begin{align}\label{eq:deltax}
\Delta x\in\{\Delta x_0,\Delta x_1,\ldots,\Delta x_5\} = \{0.1,0.05,0.02,0.01,0.005\}.
\end{align}
All these errors are presented in Tables \ref{tab:T1}--\ref{tab:T4}. We also list the experimental convergence rate:
\begin{align}\label{eq:ecr}
R_s[F]:=\frac{\ln(E_s[F(\cdot;\Delta x_{k-1})]/E_s[F(\cdot;\Delta x_k)])}{\ln(\Delta x_{k-1}/\Delta x_k)},
\end{align}
where $\Delta x_k$ stands for the grid size listed in row $k$ in each of the Tables \ref{tab:T1}--\ref{tab:T4}. The convergence rates based on the error $\tilde{E}_1$ are denoted by $\tilde{R}_1$ and are presented in Table \ref{tab:onenorm}.

\begin{table}[htbp]
  \centering
  \caption{Spatial errors and rates of convergence based on $E_0$ and $E_1$ errors for $r=1$ (Neumann--Dirichlet problem, $\Delta t= \Delta x/10$)}
    \begin{tabular}{c|cccc|cccc}\hline
   $\Delta x$ & $E_0[H]$ &  $R_0[H] $ &  $E_0[U]$ &  $R_0[U] $ & $E_1[H]$ &  $R_1[H] $ &  $E_1[U]$ &  $R_1[U] $ \\ \hline
  $0.100$  &  $5.595\times 10^{-2} $ &     --        & $ 1.527\times 10^{-2} $ &     --         &  $1.226\times 10^{0} $ &     --          & $ 1.717\times 10^{-1} $ &     --       \\
  $0.050$  &  $1.378\times 10^{-2} $ &  $2.021$ & $ 3.832\times 10^{-3} $ &  $1.994$ &  $5.966\times 10^{-1} $ &  $1.039$ & $ 7.343\times 10^{-2} $ &  $1.226$\\
  $0.020$  &  $2.197\times 10^{-3} $ &  $2.004$ & $ 6.143\times 10^{-4} $ &  $1.998$ &  $2.369\times 10^{-1} $ &  $1.008$ & $ 2.696\times 10^{-2} $ &  $1.093$\\
  $0.010$  &  $5.490\times 10^{-4} $ &  $2.001$ & $ 1.537\times 10^{-4} $ &  $1.999$ &  $1.183\times 10^{-1} $ &  $1.001$ & $ 1.314\times 10^{-2} $ &  $1.037$\\
  $0.005$  &  $1.372\times 10^{-4} $ &  $2.000$ & $ 3.842\times 10^{-5} $ &  $2.000$ &  $5.915\times 10^{-2} $ &  $1.000$ & $ 6.489\times 10^{-3} $ &  $1.018$\\
 \hline
\end{tabular}
  \label{tab:T1}%
\end{table}%

\begin{table}[htbp]
  \centering
  \caption{Spatial errors and rates of convergence based on $E_0$ and $E_1$ errors for $r=2$ (Neumann--Dirichlet problem, $\Delta t= \Delta x/10$)}
    \begin{tabular}{c|cccc|cccc}\hline
   $\Delta x$ & $E_0[H]$ &  $R_0[H] $ &  $E_0[U]$ &  $R_0[U] $ & $E_1[H]$ &  $R_1[H] $ &  $E_1[U]$ &  $R_1[U] $ \\ \hline
  $0.100$  &  $8.117\times 10^{-3} $ &     --         & $9.375\times 10^{-4} $ &     --         &  $6.287\times 10^{-1} $ &     --         &$ 7.105\times 10^{-2} $ &     --       \\
  $0.050$  &  $2.022\times 10^{-3} $ &  $2.005$ & $2.266\times 10^{-4} $ &  $2.049$ &  $3.133\times 10^{-1} $ &  $1.005$ & $ 3.491\times 10^{-2} $ &  $1.025$\\
  $0.020$  &  $3.233\times 10^{-4} $ &  $2.001$ & $3.590\times 10^{-5} $ &  $2.011$ &  $1.252\times 10^{-1} $ &  $1.001$ & $ 1.389\times 10^{-2} $ &  $1.006$\\
  $0.010$  &  $8.081\times 10^{-5} $ &  $2.000$ & $8.962\times 10^{-6} $ &  $2.002$ &  $6.259\times 10^{-2} $ &  $1.000$ & $ 6.940\times 10^{-3} $ &  $1.001$\\
  $0.005$  &  $2.020\times 10^{-5} $ &  $2.000$ & $2.240\times 10^{-6} $ &  $2.001$ &  $3.130\times 10^{-2} $ &  $1.000$ & $ 3.469\times 10^{-3} $ &  $1.000$\\
 \hline
\end{tabular}
  \label{tab:T2}%
\end{table}%

\begin{table}[htbp]
  \centering
  \caption{Spatial errors and rates of convergence based on $E_0$ and $E_1$ errors for $r=3$ (Neumann--Dirichlet problem, $\Delta t= \Delta x/10$)}
    \begin{tabular}{c|cccc|cccc}\hline
   $\Delta x$ & $E_0[H]$ &  $R_0[H] $ &  $E_0[U]$ &  $R_0[U] $ & $E_1[H]$ &  $R_1[H] $ &  $E_1[U]$ &  $R_1[U] $ \\ \hline
  $0.100$  &  $7.335\times 10^{-5} $ &     --         & $7.812\times 10^{-6} $  &     --         & $9.383\times 10^{-3} $ &     --         & $ 6.437\times 10^{-4} $ &     --       \\
  $0.050$  &  $4.508\times 10^{-6} $ &  $4.024$ & $4.865\times 10^{-7} $  &  $4.005$ &  $1.154\times 10^{-3} $ &  $3.023$ & $ 7.699\times 10^{-5} $ &  $3.064$\\
  $0.020$  &  $1.149\times 10^{-7} $ &  $4.005$ & $1.246\times 10^{-8} $  &  $3.999$ &  $7.353\times 10^{-5} $ &  $3.005$ & $ 4.835\times 10^{-6} $ &  $3.021$\\
  $0.010$  &  $7.176\times 10^{-9} $ &  $4.001$ & $7.794\times 10^{-10}$ &  $3.999$ &  $9.186\times 10^{-6} $ &  $3.001$ & $ 6.012\times 10^{-7} $ &  $3.008$\\
  $0.005$  &  $4.482\times 10^{-10} $& $4.001$ & $4.867\times 10^{-11}$ &  $4.001$ &  $1.148\times 10^{-6} $ &  $3.001$ & $ 7.496\times 10^{-8} $ &  $3.004$\\
 \hline
\end{tabular}
  \label{tab:T3}%
\end{table}%

\begin{table}[htbp]
  \centering
  \caption{Spatial errors and rates of convergence based on $E_0$ and $E_1$ errors for $r=4$ (Neumann--Dirichlet problem, $\Delta t= \Delta x/10$)}
    \begin{tabular}{c|cccc|cccc}\hline
   $\Delta x$ & $E_0[H]$ &  $R_0[H] $ &  $E_0[U]$ &  $R_0[U] $ & $E_1[H]$ &  $R_1[H] $ &  $E_1[U]$ &  $R_1[U] $ \\ \hline
  $0.100$  &  $8.590\times 10^{-6} $   &     --         & $2.146\times 10^{-7} $  &     --         & $1.629\times 10^{-3} $ &     --         & $ 3.698\times 10^{-5} $ &     --       \\
  $0.050$  &  $5.347\times 10^{-7} $  &  $4.006$ & $1.215\times 10^{-8}$  &   $4.143$ &  $2.029\times 10^{-4} $ &  $3.005$ & $ 4.487\times 10^{-6} $ &  $3.043$\\
  $0.020$  &  $1.367\times 10^{-8} $   &  $4.001$ & $3.014\times 10^{-10}$  & $4.034$ &  $1.297\times 10^{-5} $ &  $3.001$ & $ 2.847\times 10^{-7} $ &  $3.010$\\
  $0.010$  &  $8.531\times 10^{-10} $ &  $4.003$ & $1.875\times 10^{-11}$ &  $4.007$ &  $1.619\times 10^{-6} $ &  $3.003$ & $ 3.553\times 10^{-8} $ &  $3.002$\\
 \hline
\end{tabular}
  \label{tab:T4}%
\end{table}%

\begin{table}[htbp]
\caption{Spatial errors and rates of convergence based on $\tilde{E}_1$ error (Neumann--Dirichlet problem, $\Delta t= \Delta x/10$)}
\centering
\begin{tabular}{c|cccc|cccc}\hline
   $\Delta x$ & $\tilde{E}_1[H]$ &  $\tilde{R}_1[H] $ &  $\tilde{E}_1[U]$ &  $\tilde{R}_1[U] $ & $\tilde{E}_1[H]$ &  $\tilde{R}_1[H] $ &  $\tilde{E}_1[U]$ &  $\tilde{R}_1[U] $ \\ \hline
 & \multicolumn{4}{c|}{$r=1$} & \multicolumn{4}{c}{$r=2$}   \\ \hline
  $0.100$  &  $ 2.012\times 10^{-1} $   &     --       & $6.414\times 10^{-2} $ &     --        & $4.421\times 10^{-2} $ &     --         & $ 1.064\times 10^{-2} $ &     --       \\
  $0.050$  &  $ 4.945\times 10^{-2} $ &  $2.024$ & $1.592\times 10^{-2}$  & $2.010$ &  $1.111\times 10^{-2} $ &  $1.992$ & $  2.677\times 10^{-3} $ &  $1.990$\\
  $0.020$  &  $ 7.891\times 10^{-3} $ &  $2.003$ & $2.546\times 10^{-3}$  & $2.001$ &  $1.782\times 10^{-3} $ &  $1.998$ & $ 4.300\times 10^{-4} $ &  $1.996$\\
  $0.010$  &  $ 1.973\times 10^{-3} $ &  $2.000$ & $6.365\times 10^{-4}$ &  $2.000$ &  $4.457\times 10^{-4} $ &  $1.999$ & $ 1.076\times 10^{-4} $ &  $1.999$\\
  $0.005$  &  $ 4.934\times 10^{-4} $&   $2.000$ & $1.591\times 10^{-4}$ &  $2.000$ &  $1.114\times 10^{-4} $ &  $2.000$ & $ 2.690\times 10^{-5} $ &  $2.000$\\
 \hline
 & \multicolumn{4}{c|}{$r=3$} & \multicolumn{4}{c}{$r=4$}   \\ \hline
  $0.100$  &  $ 5.358\times 10^{-4} $   &     --       & $8.736\times 10^{-5} $ &     --        &   $6.485\times 10^{-5} $ &     --         &   $ 1.020\times 10^{-5} $ &     --       \\
  $0.050$  &  $ 3.209\times 10^{-5} $ &  $4.062$ & $5.046\times 10^{-6}$  &   $4.114$ &  $4.103\times 10^{-6} $ &    $3.982$ & $ 6.605\times 10^{-7} $ &   $3.949$\\
  $0.020$  &  $ 8.159\times 10^{-7} $ &  $4.007$ & $1.268\times 10^{-7}$  &   $4.021$ &  $1.055\times 10^{-7} $ &    $3.996$ & $ 1.713\times 10^{-8} $ &   $3.986$\\
  $0.010$  &  $ 5.102\times 10^{-8} $ &  $3.999$ & $7.903\times 10^{-9}$  &   $4.004$ &  $6.593\times 10^{-9} $ &    $4.000$ & $ 1.073\times 10^{-9} $ &   $3.997$\\
  $0.005$  &  $ 3.191\times 10^{-9} $&   $3.999$ & $4.935\times 10^{-10}$ &  $4.001$ &  $4.139\times 10^{-10} $ &  $3.994$ & $ 6.750\times 10^{-11} $ &  $3.990$\\
 \hline
\end{tabular}
\label{tab:onenorm}
\end{table}%

It is easily observed that the convergence rates for odd values of $r$ are
optimal, as expected. On the other hand, the convergence rates for even $r$ are
suboptimal. More specifically, we conjecture the following rule for the
convergence rates:
$$
\begin{aligned}
& R^{2k-1}_0[H]=R^{2k-1}_0[U]=\tilde{R}^{2k-1}_1[H]=\tilde{R}^{2k-1}_1[U]=2k,\quad R^{2k-1}_1[H]=R^{2k-1}_1[U]=2k-1 ,\\
& R^{2k}_0[H]=R^{2k}_0[U]=\tilde{R}^{2k}_1[H]=\tilde{R}^{2k}_1[U]=2k,\quad R^{2k}_1[H]=R^{2k}_1[U]=2k-1,
\end{aligned}  \quad \text{ for } k=1,2,\ldots.
$$
The respective errors are smaller for larger values of $r$ and for the same grids,
while the convergence rates are the same. It is also noted that when $r=4$ the
numerical method with $\Delta t=0.01$ reaches its limits with errors of the order
of $10^{-12}$ and thus no further convergence can be observed in double-precision floating point arithmetic.

A similar even/odd dichotomy is known to occur for discontinuous Galerkin approximations to linear second-order elliptic partial differential equations (cf. \cite{MR2511735}) and for first-order linear hyperbolic equations (cf. \cite{liu2020sub}) on uniform meshes. In the context of dispersion analysis on uniform computational meshes, an even/odd dichotomy is reported in \cite{A2014} for both continuous and discontinuous Galerkin approximations of a first-order linear hyperbolic equation in one space dimension, and in \cite{MR2576525} for a linear second-order wave equation in one space dimension. For a continuous Galerkin approximation of the regularised shallow water wave system considered here, the observed even/odd dichotomy is perhaps new and unusual.

On the other hand, the convergence rates using the modified $H^1$-error norms
(\ref{eq:modnorm}) appear to be super-optimal for odd and optimal for even
values of $r$. In particular, it can be observed that
$$
\tilde{R}^{2k-1}_1[H]=\tilde{R}^{2k-1}_1[U]=\tilde{R}^{2k}_1[H]=\tilde{R}^{2k}_1[U]=2k , \text{ for }\quad k=1,2,\ldots.
$$
The observed super-approximation property is promising from the point of view of further
developments of the specific conservative method.

\subsection{Spatial convergence rates for the periodic problem}

Similarly to the Neumann--Dirichlet boundary value problem we consider a
non-homogenous initial-periodic boundary value problem of the form (\ref{eq:ibvp1}),
(\ref{eq:bc2}) admitting the exact solution
\begin{align}
\begin{aligned}
\eta(x,t)&=\mathrm{e}^{t}\sin(2\pi (x-2t)),\\
u(x,t)&=\mathrm{e}^{t/2}\sin(2\pi (x-t/2),
\end{aligned}
\end{align}
in $[0,1]$ with the appropriate right-hand sides. We integrate the
initial-periodic boundary value problem with the semidiscretisation
(\ref{eq:mixed2}), (\ref{eq:bcmixed2}) using  Algorithm \ref{alg:timem} adapted to
periodic boundary conditions up to $T=1$, and we record the errors $E_s[H]$,
$E_s[U]$, for $s=0,1$, and $\tilde{E}_1[H]$ and $\tilde{E}_1[U]$. The corresponding
experimental rates of convergence computed using (\ref{eq:ecr}), (\ref{eq:deltax})
are very similar to those obtained in the Neumann--Dirichlet case but with some
exceptions. In the case of piecewise linear elements in $\mathcal{P}^1$ the
rates in the $H^1$-norm are improved, presumably because of the replacement of the Dirichlet boundary conditions with periodic boundary conditions. The experimental convergence rates are presented in Tables \ref{tab:T6}--\ref{tab:T9}.

\begin{table}[htbp]
  \centering
  \caption{Spatial errors and rates of convergence  based on $E_0$ and $E_1$ errors  for $r=1$ (Periodic problem, $\Delta t= \Delta x/10$)}
    \begin{tabular}{c|cccc|cccc}\hline
   $\Delta x$ & $E_0[H]$ &  $R_0[H] $ &  $E_0[U]$ &  $R_0[U] $ & $E_1[H]$ &  $R_1[H] $ &  $E_1[U]$ &  $R_1[U] $ \\ \hline
  $0.100$  &  $6.310\times 10^{-2} $ &     --        &  $ 7.875\times 10^{-2} $ &     --         & $2.298\times 10^{-0} $ &     --        & $ 2.793\times 10^{-0} $ &     --       \\
  $0.050$  &  $1.579\times 10^{-2} $ &  $1.998$ & $ 1.930\times 10^{-2} $ &  $2.028$ &  $1.109\times 10^{-0} $ &  $1.051$ & $ 1.345\times 10^{-0} $ &  $1.054$\\
  $0.020$  &  $2.529\times 10^{-3} $ &  $1.999$ & $ 3.072\times 10^{-3} $ &  $2.006$ &  $4.390\times 10^{-1} $ &  $1.011$ & $ 5.325\times 10^{-1} $ &  $1.011$\\
  $0.010$  &  $6.323\times 10^{-4} $ &  $2.000$ & $ 7.673\times 10^{-4} $ &  $2.001$ &  $2.192\times 10^{-1} $ &  $1.002$ & $ 2.659\times 10^{-1} $ &  $1.002$\\
  $0.005$  &  $1.581\times 10^{-4} $ &  $2.000$ & $ 1.918\times 10^{-4} $ &  $2.000$ &  $1.095\times 10^{-1} $ &  $1.001$ & $ 1.329\times 10^{-1} $ &  $1.001$\\
 \hline
\end{tabular}
  \label{tab:T6}%
\end{table}%

\begin{table}[htbp]
  \centering
  \caption{Spatial errors and rates of convergence  based on $E_0$ and $E_1$ errors for $r=2$ (Periodic problem, $\Delta t= \Delta x/10$)}
    \begin{tabular}{c|cccc|cccc}\hline
   $\Delta x$ & $E_0[H]$ &  $R_0[H] $ &  $E_0[U]$ &  $R_0[U] $ & $E_1[H]$ &  $R_1[H] $ &  $E_1[U]$ &  $R_1[U] $ \\ \hline
  $0.100$  &  $3.957\times 10^{-3} $ &     --        &  $ 8.803\times 10^{-3} $ &     --         & $3.221\times 10^{-1} $ &     --        & $  6.550\times 10^{-1} $ &     --       \\
  $0.050$  &  $9.034\times 10^{-4} $ &  $2.131$ & $ 2.229\times 10^{-3} $ &  $1.982$ &  $1.425\times 10^{-1} $ &  $1.177$ & $ 3.413\times 10^{-1} $ &  $0.941$\\
  $0.020$  &  $1.405\times 10^{-4} $ &  $2.031$ & $ 3.581\times 10^{-4} $ &  $1.995$ &  $5.459\times 10^{-2} $ &  $1.047$ & $ 1.384\times 10^{-1} $ &  $0.985$\\
  $0.010$  &  $3.498\times 10^{-5} $ &  $2.006$ & $ 8.958\times 10^{-5} $ &  $1.999$ &  $2.712\times 10^{-2} $ &  $1.009$ & $ 6.935\times 10^{-2} $ &  $0.997$\\
  $0.005$  &  $8.736\times 10^{-6} $ &  $2.002$ & $ 2.240\times 10^{-5} $ &  $2.000$ &  $1.354\times 10^{-2} $ &  $1.002$ & $ 3.470\times 10^{-2} $ &  $0.999$\\
 \hline
\end{tabular}
  \label{tab:T7}%
\end{table}%

\begin{table}[htbp]
  \centering
  \caption{Spatial errors and rates of convergence  based on $E_0$ and $E_1$ errors for $r=3$ (Periodic problem, $\Delta t= \Delta x/10$)}
    \begin{tabular}{c|cccc|cccc}\hline
   $\Delta x$ & $E_0[H]$ &  $R_0[H] $ &  $E_0[U]$ &  $R_0[U] $ & $E_1[H]$ &  $R_1[H] $ &  $E_1[U]$ &  $R_1[U] $ \\ \hline
  $0.100$  &  $7.548\times 10^{-5} $ &     --          &  $ 9.877\times 10^{-5} $ &     --          & $1.156\times 10^{-2} $ &     --         & $1.450\times 10^{-2} $ &     --       \\
  $0.050$  &  $4.689\times 10^{-6} $   &  $4.009$ & $ 5.806\times 10^{-6} $  &  $4.088$ &  $1.418\times 10^{-3} $ &  $3.027$ & $1.735\times 10^{-3} $ &  $3.063$\\
  $0.020$  &  $1.199\times 10^{-7} $   &  $4.001$ & $ 1.461\times 10^{-7} $  &  $4.019$ &  $9.031\times 10^{-5} $ &  $3.005$ & $1.097\times 10^{-4} $ &  $3.013$\\
  $0.010$  &  $7.494\times 10^{-9} $   &  $4.000$ & $ 9.109\times 10^{-9} $  &  $4.004$ &  $1.128\times 10^{-5} $ &  $3.001$ & $1.369\times 10^{-5} $ &  $3.002$\\
  $0.005$  &  $4.686\times 10^{-10} $ &  $3.999$ & $ 5.713\times 10^{-10} $ & $3.995$ &  $1.410\times 10^{-6} $ &  $3.000$ & $1.710\times 10^{-6} $ &  $3.001$\\
 \hline
\end{tabular}
  \label{tab:T8}%
\end{table}%

\begin{table}[htbp]
  \centering
  \caption{Spatial errors and rates of convergence  based on $E_0$ and $E_1$ errors for $r=4$ (Periodic problem, $\Delta t= \Delta x/10$)}
    \begin{tabular}{c|cccc|cccc}\hline
   $\Delta x$ & $E_0[H]$ &  $R_0[H] $ &  $E_0[U]$ &  $R_0[U] $ & $E_1[H]$ &  $R_1[H] $ &  $E_1[U]$ &  $R_1[U] $ \\ \hline
  $0.100$  &  $4.639\times 10^{-6} $ &     --          &  $ 1.149\times 10^{-5} $ &     --          & $9.062\times 10^{-4} $ &     --         & $2.121\times 10^{-3} $ &     --       \\
  $0.050$  &  $2.704\times 10^{-7} $   &  $4.101$ & $ 7.208\times 10^{-7} $  &  $3.994$ &  $1.035\times 10^{-4} $ &  $3.130$ & $2.715\times 10^{-4} $ &  $2.966$\\
  $0.020$  &  $6.776\times 10^{-9} $   &  $4.023$ & $ 1.848\times 10^{-8} $  &  $3.999$ &  $6.435\times 10^{-6} $ &  $3.032$ & $1.751\times 10^{-5} $ &  $2.992$\\
  $0.010$  &  $4.220\times 10^{-10} $ &  $4.005$ & $ 1.155\times 10^{-9} $  &  $4.000$ &  $8.005\times 10^{-7} $ &  $3.007$ & $2.191\times 10^{-6} $ &  $2.998$\\
  $0.005$  &  $2.872\times 10^{-11} $ &  $3.877$ & $ 6.951\times 10^{-11} $ &  $4.055$ &  $1.079\times 10^{-7} $ &  $2.891$ & $2.607\times 10^{-7} $ &  $3.071$\\
 \hline
\end{tabular}
  \label{tab:T9}%
\end{table}%

The convergence rates seem to follow the same pattern as in the case of
reflective boundary conditions, and thus for odd values of $r$ we observe the
expected optimal behaviour in the $L^2$-norm. In addition to these errors,
we also computed the errors and convergence rates for the homogenous periodic
boundary value problem (\ref{eq:ibvp1}), (\ref{eq:bc2}) using the known, exact
formula of the  travelling wave solution (\ref{eq:exactsol}) in the interval
$[a,b]=[-20,20]$. We integrate the specific initial-periodic boundary value problem
until $T=10$ using the relaxation Runge--Kutta method based on the explicit
four-stage, fourth-order Runge--Kutta method with tableau (\ref{eq:RK4}).
The time interval of the integration is adequate to allow the travelling wave to
complete one period, and thus to test the boundary conditions as well.
Our numerical simulation was stable for the whole time of integration even though
the specific travelling wave is known to be unstable \cite{BC2016}. The results
are almost identical to the non-homogenous case and the convergence rates
(not shown here) agree with the convergence rates we presented in Tables \ref{tab:T6}--\ref{tab:T9}.

We also computed the modified $H^1$-errors (\ref{eq:modnorm}). The experimental
convergence rates are presented in Table \ref{tab:T10} where we observe the same
pattern in the rates as with the previous boundary-value problem.

\begin{table}[htbp]
\caption{Spatial errors and rates of convergence based on $\tilde{E}_1$ error (Periodic problem, $\Delta t= \Delta x/10$)}
\centering
\begin{tabular}{c|cccc|cccc}\hline
   $\Delta x$ & $\tilde{E}_1[H]$ &  $\tilde{R}_1[H] $ &  $\tilde{E}_1[U]$ &  $\tilde{R}_1[U] $ & $\tilde{E}_1[H]$ &  $\tilde{R}_1[H] $ &  $\tilde{E}_1[U]$ &  $\tilde{R}_1[U] $ \\ \hline
 & \multicolumn{4}{c|}{$r=1$} & \multicolumn{4}{c}{$r=2$}   \\ \hline
  $0.100$  &  $ 3.954\times 10^{-1} $   &     --       & $4.800\times 10^{-1} $ &     --        & $1.394\times 10^{-2} $ &     --         & $ 6.711\times 10^{-2} $ &     --       \\
  $0.050$  &  $ 1.001\times 10^{-1} $ &  $1.982$ & $1.216\times 10^{-1}$  & $1.982$ &  $3.630\times 10^{-3} $ &  $1.941$ & $ 1.660\times 10^{-2} $ &  $2.015$\\
  $0.020$  &  $ 1.608\times 10^{-2} $ &  $1.995$ & $1.951\times 10^{-2}$  & $1.997$ &  $6.116\times 10^{-4} $ &  $1.944$ & $ 2.647\times 10^{-3} $ &  $2.004$\\
  $0.010$  &  $ 4.022\times 10^{-3} $ &  $1.999$ & $4.880\times 10^{-3}$ &  $1.999$ &  $1.543\times 10^{-4} $ &  $1.987$ & $ 6.615\times 10^{-4} $ &  $2.001$\\
  $0.005$  &  $ 1.006\times 10^{-3} $&   $2.000$ & $1.220\times 10^{-3}$ &  $2.000$ &  $3.865\times 10^{-5} $ &  $1.997$ & $ 1.653\times 10^{-4} $ &  $2.000$\\
 \hline
 & \multicolumn{4}{c|}{$r=3$} & \multicolumn{4}{c}{$r=4$}   \\ \hline
  $0.100$  &  $ 5.860\times 10^{-4} $   &     --       & $6.244\times 10^{-4} $ &     --        &   $5.967\times 10^{-5} $ &     --         &   $7.859\times 10^{-5} $ &     --       \\
  $0.050$  &  $ 3.071\times 10^{-5} $ &  $4.254$ & $3.663\times 10^{-5}$  &   $4.091$ &  $4.523\times 10^{-6} $ &    $3.722$ & $5.083\times 10^{-6} $ &   $3.951$\\
  $0.020$  &  $ 7.658\times 10^{-7} $ &  $4.029$ & $9.280\times 10^{-7}$  &   $4.011$ &  $1.227\times 10^{-7} $ &    $3.936$ & $1.319\times 10^{-7} $ &   $3.985$\\
  $0.010$  &  $ 4.772\times 10^{-8} $ &  $4.004$ & $5.793\times 10^{-8}$  &   $4.002$ &  $7.735\times 10^{-9} $ &    $3.988$ & $8.212\times 10^{-9} $ &   $4.005$\\
  $0.005$  &  $ 2.973\times 10^{-9} $ &  $4.005$ & $3.607\times 10^{-9}$ &    $4.005$ &  $4.939\times 10^{-10} $ &  $3.969$ & $6.131\times 10^{-10} $ & $3.744$\\
 \hline
\end{tabular}
\label{tab:T10}
\end{table}%

In addition to the experimental convergence rates and errors we recorded the energy
$\mathcal{E}(t^n;H^n,U^n)$ and the mass $\mathcal{M}(t^n; H^n)$. Both quantities
are conserved to within 15 decimal digits. Indicatively, we present the graphs of the
errors in mass, energy and the relaxation parameter in Figure \ref{fig:errors1} in
the cases of $r=1$ and $r=3$ with $\Delta x=0.25$. We observe
that the errors are always less than $10^{-13}$ in the conserved quantities, independent of
the choice of the degree of the piecewise polynomial in the finite element. The parameter $\gamma^n$ remains always
close to 1, indicating in general the good conservation properties of the specific
choice of Runge--Kutta method for the integration in time of such a problem.

\begin{figure}[h!]
  \centering
  \includegraphics[width=\columnwidth]{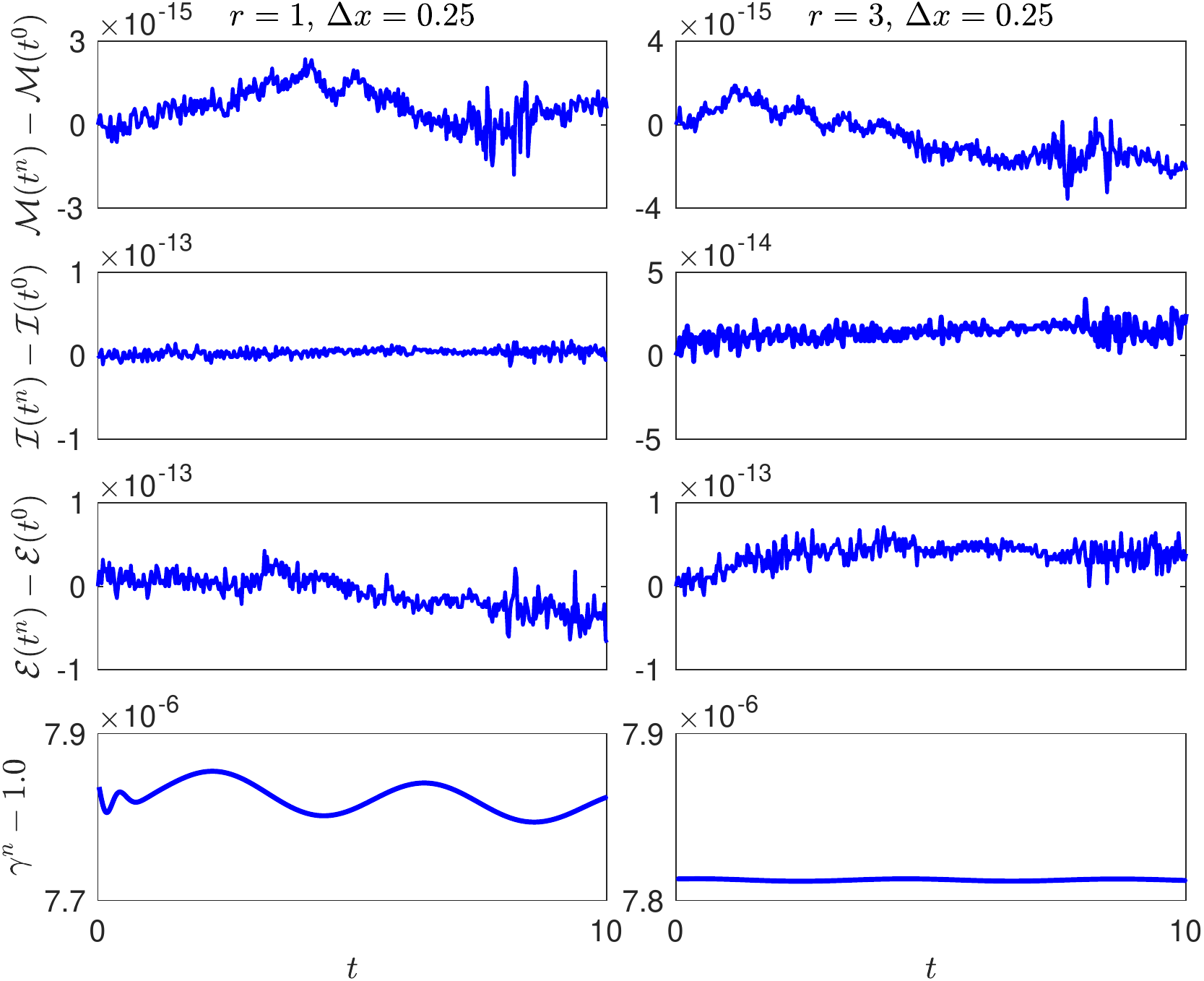}
  \caption{Errors in the mass, energy and relaxation parameters for the conservative Galerkin method with relaxation Runge--Kutta method of order 4, $\Delta t=\Delta x/10$}
  \label{fig:errors1}
\end{figure}
\begin{figure}[h!]
  \centering
  \includegraphics[width=\columnwidth]{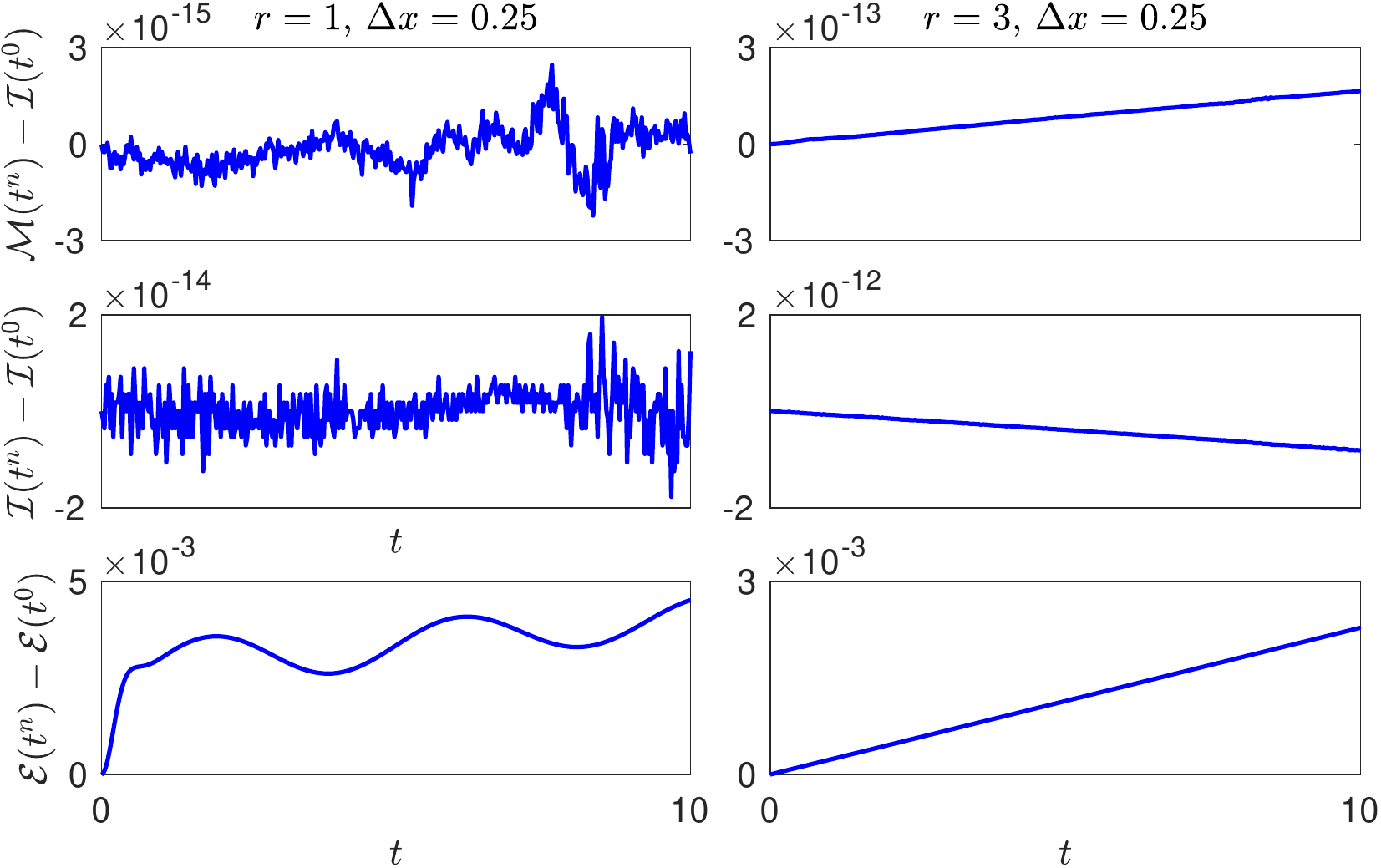}
  \caption{Errors in the mass and energy, standard Galerkin method with classical Runge--Kutta method of order 4, $\Delta t=\Delta x/10$}
  \label{fig:errors2}
\end{figure}

Figure \ref{fig:errors2} presents the error in mass and energy
obtained with the standard Galerkin method (\ref{eq:sd2}) and the classical
Runge--Kutta scheme (without relaxation). We observe that using the same
discretisation parameters as with the conservative method, the mass is conserved
but the energy is increasing with time. In both cases  the error in the
energy conservation is quite large with accuracy to within 3 digits.  In all cases where high-order elements were used, other
floating-point errors arise, resulting in relatively large errors in the conserved quantities
even though the errors in the solutions appear to be small.  Note that the
fully-discrete scheme based on the standard Galerkin method does not benefit from the
use of the relaxation Runge--Kutta method since the spatial discretisation does not
preserve the energy. Moreover, using the standard Galerkin method with cubic splines
for the spatial discretisation, the conservation properties are much better than
those reported here for both the conservative and non-conservative methods,
\cite{ADM2010}. In two-dimensional problems where Lagrange elements are practically
the only choice, the new method can be of significant importance.

\subsection{Errors in the propagation of travelling waves}

To further assess the efficiency of the new numerical method we study some error
indicators relevant to the propagation of travelling waves, namely, the amplitude,
shape and phase errors. For any timestep $t^n$ we define the (normalised)
\emph{amplitude error} as
\begin{align}
E_{\text{amp}}(t^n):=\frac{|H(x^\ast(t^n),t^n)-H_0|}{|H_0|},
\end{align}
where $H(x,t^n)=H^n$ is the fully discrete approximation of the free-surface
elevation $\eta$ at $t^n$, $H_0$ is the initial peak amplitude and $x^\ast(t^n)$
is the point where $H(\cdot,t^n)$ achieves its maximum. In order to find the value
$x^\ast(t^n)$ we solve the equation $\frac{\dd}{\dd x} H(x,t^n)=0$ using the bisection
method. (In the case of piecewise linear functions $x^\ast(t^n)$ is always a node
of the grid and so we do not use the bisection method). The bisection method is
initiated with an interval of length $10\cdot\Delta x$.

Having the value $x^\ast(t^n)$ computed, we define the \emph{phase error} that
measures the error between the numerical approximation and the theoretical value
of the phase speed $c_s$ of a solitary wave as
\begin{align}
E_{\text{phase}}:=|x^\ast(t^n)-c_s t^n|.
\end{align}
The (normalised) \emph{shape error} measures the difference in shape between the
numerical solution and the exact travelling wave translated appropriately for
best fit. The shape error with respect to the $L^2$ norm is defined as
\begin{align}
E_{\text{shape}}(t^n):=\min_s~\zeta(s),\qquad \zeta(s):=\frac{\|H(\cdot,t^n)-\eta(\cdot,s)\|}{\|\eta(\cdot,0)\|},
\end{align}
where the minimum of $\zeta(s)$ is found again using the bisection method for
solving the equation $\frac{\mathrm{d}}{\mathrm{d}s}\zeta^2 (s)=0$ in the vicinity of $t^n$.

Note that for the bisection method we use a convergence tolerance of $10^{-10}$
for the absolute error. For the computation of the $L^2$ norm we use the
Gauss--Legendre numerical quadrature with three nodes in each subinterval.

We study the propagation of a classical solitary wave with moderate speed
$c_s=\sqrt{1.6}$ in the interval $[-20,20]$ and with periodic boundary conditions
up to $T=1000$. We generated the specific solitary wave using the Petviashvili
method with cubic Lagrange elements $\mathcal{P}^3$ (see Appendix \ref{sec:appendix}). In the case of linear or quadratic Lagrange elements, we used the $L^2$-projection of the solitary wave and its first derivative onto the appropriate finite element space. The reason for using the $\mathcal{P}^3$ numerical solution for the travelling wave solution is because the accuracy of the derivative of the initial conditions can affect the accuracy of the conservative method. It also serves better for comparison purposes. The amplitude of the generated solitary wave is $A\approx 0.5919$.

We recorded the previously mentioned error indicators for piecewise linear,
quadratic and cubic Lagrange elements ($\mathcal{P}^r$, $r=1,2,3$). For these
experiments we consider a uniform spatial grid with $\Delta x=0.1$, while for the
integration in time we considered $\Delta t=\Delta x=0.1$. The
propagation of the specific solitary wave is stable even for larger values of
$\Delta t$. For example, the propagation of the solitary wave with $\mathcal{P}^1$
elements turns out to be unstable for $\Delta t>10\Delta x$ while it remains stable for $\Delta t\leq 10\Delta x$. This observation
indicates that the fully-discrete scheme remains in general stable without imposing restrictive stability conditions even for the
conservative Galerkin method.

\begin{figure}[h!]
  \centering
  \includegraphics[width=\columnwidth]{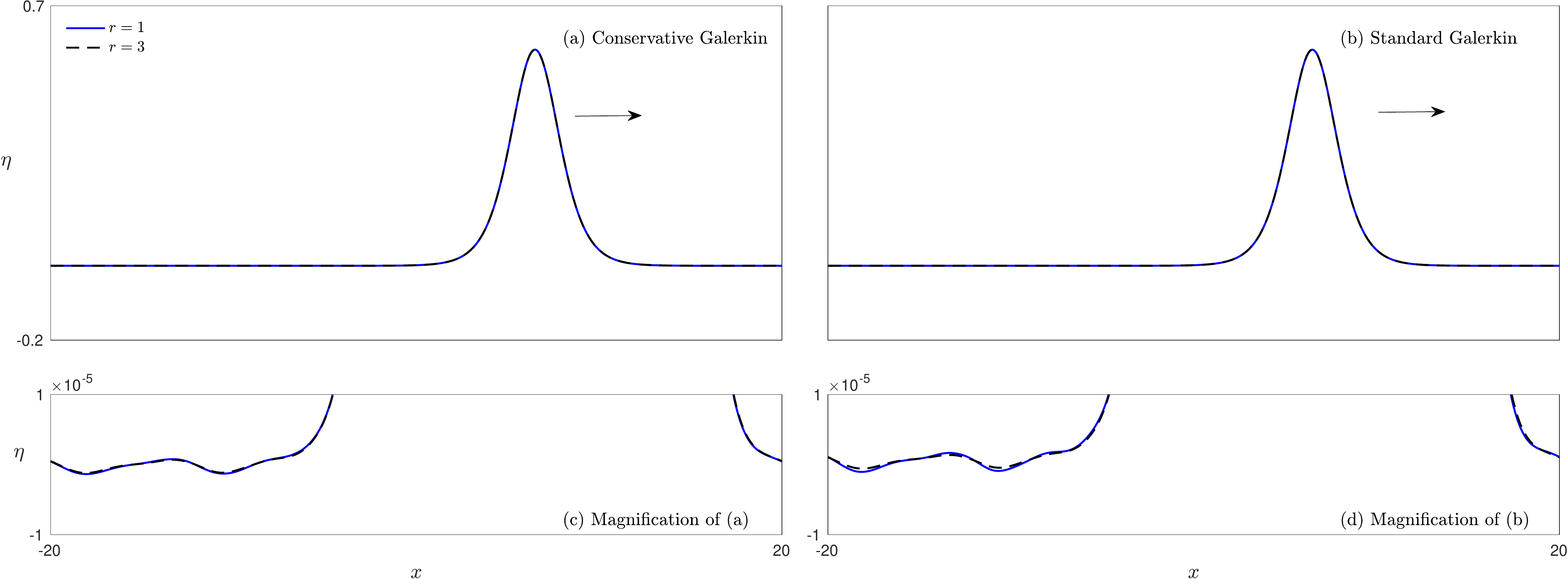}
  \caption{The propagation of a solitary wave with $c_s=\sqrt{1.6}$ and magnification of trailing tails. Periodic boundary conditions, $T=100$, ($\Delta x=\Delta t=0.1$).  Note the different $y$-axis scales in the lower figures.}
  \label{fig:solwave0}
\end{figure}
\begin{figure}[ht!]
  \centering
  \includegraphics[width=\columnwidth]{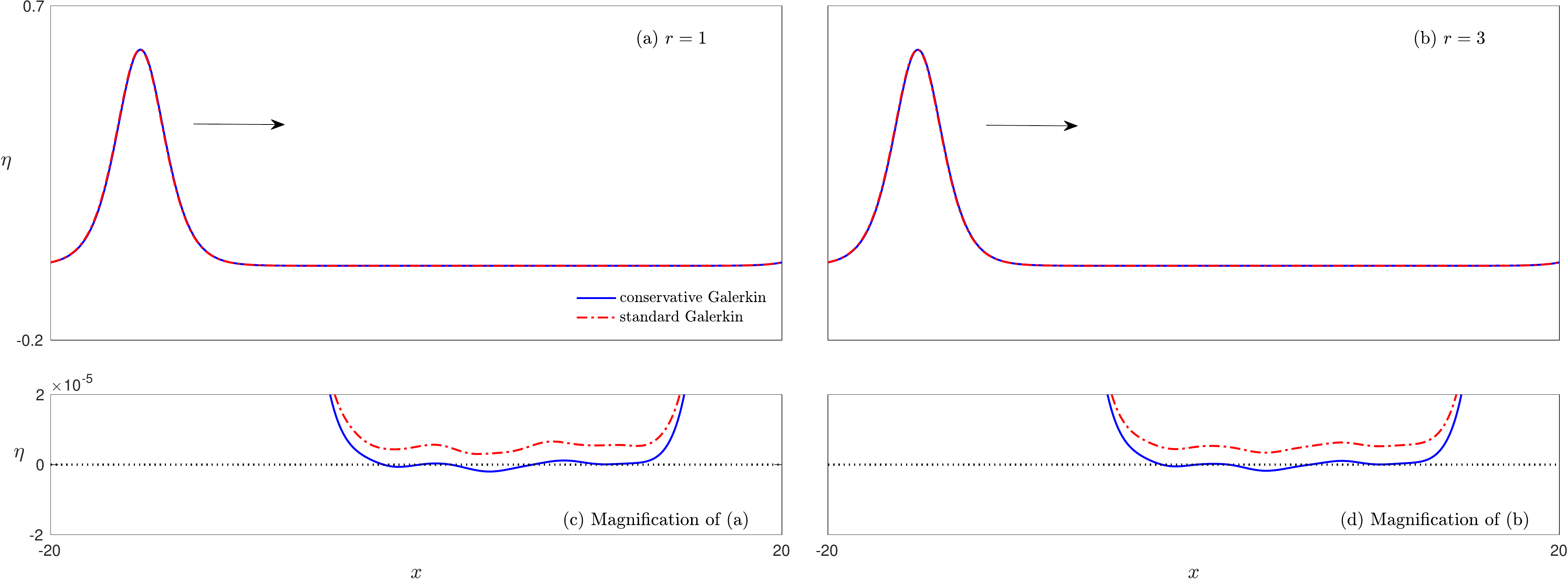}
  \caption{The propagation of a solitary wave with $c_s=\sqrt{1.6}$ and magnification of trailing tails. Periodic boundary conditions, $T=1000$, ($\Delta x=\Delta t=0.1$).}
  \label{fig:sw22}
\end{figure}

\begin{table}[ht!]
\caption{Mean value of $E_{amp}$, $E_{phase}$, $E_{shape}$ for $t\in[80,100]$ in the propagation of a classical solitary wave with speed $c_s=\sqrt{1.6}$, ($\Delta x=\Delta t$, $T=100$); (a) conservative Galerkin, (b) standard Galerkin }
\label{tab:errorsind}
\begin{tabular}{ l || l l l  | l l l  }
 \multicolumn{7}{l}{(a) Conservative Galerkin} \\
 \hline
 \multicolumn{1}{c}{} &  \multicolumn{3}{l}{$r=1$, piecewise linear polynomials} &  \multicolumn{3}{c}{$r=2$, piecewise quadratic polynomials} \\
 \hline
$\Delta x$ & $E_{amp}$ & $E_{phase}$ & $E_{shape}$ & $E_{amp}$ & $E_{phase}$ & $E_{shape}$  \\
 \hline
$0.1$ & $2.7351\times 10^{-4}$ & $2.4913\times 10^{-2}$ &  $1.8112\times 10^{-4}$ & $3.8778\times 10^{-5}$ & $1.5362\times 10^{-4}$ & $3.4944\times 10^{-5}$ \\
$0.05$ & $6.6823\times 10^{-5}$ &  $1.2492\times 10^{-2}$ & $4.5165\times 10^{-5}$ & $9.5895\times 10^{-6}$ & $1.8518\times 10^{-5}$ & $8.6722\times 10^{-6}$\\
$0.025$ & $1.6610\times 10^{-5}$ &  $6.2509\times 10^{-3}$ & $1.1296\times 10^{-5}$ & $2.4218\times 10^{-6}$ & $3.9029\times 10^{-6}$ & $2.2284\times 10^{-6}$ \\
 \hline
\end{tabular}
\begin{tabular}{ l || l l l}
 \multicolumn{1}{c}{} &  \multicolumn{3}{l}{$r=3$, piecewise cubic polynomials} \\
 \hline
$\Delta x$ & $E_{amp}$ & $E_{phase}$ & $E_{shape}$ \\
 \hline
$0.1$ & $8.1121\times 10^{-6}$ & $1.4479\times 10^{-4}$ & $9.0861\times 10^{-6}$  \\
$0.05$ & $4.7310\times 10^{-7}$ & $7.1440\times 10^{-6}$  & $6.2001\times 10^{-7}$ \\
$0.025$ & $2.3526\times 10^{-8}$ & $3.6515\times 10^{-7}$ & $3.4356\times 10^{-7}$ \\
 \hline
\end{tabular}

\begin{tabular}{ l || l l l  | l l l  }
\multicolumn{7}{l}{(b) Standard Galerkin} \\
\hline
 \multicolumn{1}{c}{} &  \multicolumn{3}{l}{$r=1$, piecewise linear polynomials} &  \multicolumn{3}{c}{$r=2$, piecewise quadratic polynomials} \\
 \hline
$\Delta x$ & $E_{amp}$ & $E_{phase}$ & $E_{shape}$ & $E_{amp}$ & $E_{phase}$ & $E_{shape}$ \\
 \hline
$0.1$ & $8.8025\times 10^{-4}$  & $2.5331\times 10^{-2}$ & $9.1723\times 10^{-4}$ & $2.2471\times 10^{-5}$ & $2.7796\times 10^{-4}$ & $1.8906\times 10^{-5}$ \\
$0.05$ & $6.7293\times 10^{-5}$ & $4.5788\times 10^{-2}$ & $5.5322\times 10^{-5}$ & $8.3509\times 10^{-7}$ & $9.7556\times 10^{-6}$ & $1.0418\times 10^{-6}$\\
$0.025$ & $1.6622\times 10^{-5}$ & $3.1229\times 10^{-2}$ & $1.3839\times 10^{-5}$ & $3.3815\times 10^{-8}$ & $5.2705\times 10^{-7}$ & $3.9631\times 10^{-7}$ \\
 \hline
\end{tabular}

\begin{tabular}{ l || l l l}
 \multicolumn{1}{c}{} &  \multicolumn{3}{l}{$r=3$, piecewise cubic polynomials} \\
 \hline
$\Delta x$ & $E_{amp}$ & $E_{phase}$ & $E_{shape}$ \\
 \hline
$0.1$ & $2.2249\times 10^{-5}$ & $1.8299\times 10^{-5}$ & $2.7805\times 10^{-4}$ \\
$0.05$ & $8.2113\times 10^{-7}$ & $9.7529\times 10^{-6}$ & $8.5842\times 10^{-7}$ \\
$0.025$ & $3.2946\times 10^{-8}$ & $4.2946\times 10^{-7}$ & $3.8875\times 10^{-7}$ \\
 \hline
\end{tabular}
\end{table}

As the solitary wave travels to the right, several oscillations are shed to the
left of the wave due to errors of the initial approximation and also due to error
introduced by the fully-discrete scheme. By the time $T=100$ the solitary wave has
crossed the boundaries three times. The solitary wave profile is shown in
Figures \ref{fig:solwave0}(a) and (b) for $T=100$. In these figures it is hard to observe differences between
the solutions. We are only able to observe differences and the generation of the various trailing tails in a magnification of the solitary wave at around the rest position of the free surface. These oscillations become smaller by
increasing the degree $r$ of the elements or by decreasing the mesh length
$\Delta x$. For the specific values of $\Delta x$ and $\Delta t$,
the magnitude of the tails is less than $10^{-5}$, which is much less than the
expected error of order $\Delta x^r$.

\begin{table}[ht!]
\caption{Conservation of invariants $\mathcal{M}$, $\mathcal{I}$, $\mathcal{H}$, $\mathcal{E}$,  in the propagation of a classical solitary wave with speed $c_s=\sqrt{1.6}$, ($\Delta x=\Delta t$, $T=100$); (a) conservative Galerkin, (b) standard Galerkin }
\label{tab:errorsinvars}
\begin{tabular}{l || l l l l}
\multicolumn{5}{l}{(a) Conservative Galerkin} \\
\hline
\multicolumn{1}{c}{} &  \multicolumn{4}{l}{$r=1$, piecewise linear polynomials} \\
 \hline
$\Delta x$ & $E_{\mathcal{M}}$ & $E_{\mathcal{I}}$ & $E_{\mathcal{H}}$ & $E_{\mathcal{E}}$ \\
 \hline
$0.1$ & $6.2172\times 10^{-15}$ & $4.2188\times 10^{-15}$ & $1.2656\times 10^{-7}$ & $1.8874\times 10^{-15}$ \\
$0.05$ & $1.4211\times 10^{-14}$ &  $9.3259\times 10^{-15}$ & $8.1135\times 10^{-9}$ & $3.3307\times 10^{-15}$ \\
$0.025$ & $1.5987\times 10^{-14}$ &  $8.2157\times 10^{-15}$ & $5.1039\times 10^{-10}$ & $4.7740\times 10^{-15}$ \\
 \hline
\end{tabular}
\begin{tabular}{l || l l l l}
\multicolumn{1}{c}{} &  \multicolumn{4}{l}{$r=2$, piecewise quadratic polynomials} \\
\hline
$\Delta x$ & $E_{\mathcal{M}}$ & $E_{\mathcal{I}}$ & $E_{\mathcal{H}}$ & $E_{\mathcal{E}}$ \\
\hline
$0.1$ &  $7.5495\times 10^{-15}$ & $4.2188\times 10^{-15}$ & $2.2095\times 10^{-6}$ & $1.8874\times 10^{-15}$ \\
$0.05$ &    $3.5083\times 10^{-14}$ & $2.2427\times 10^{-14}$ & $5.5608\times 10^{-7}$ & $3.2196\times 10^{-15}$\\
$0.025$ &  $5.9064\times 10^{-14}$ & $5.3069\times 10^{-14}$ & $1.3909\times 10^{-7}$ & $4.9960\times 10^{-15}$ \\
 \hline
\end{tabular}
\begin{tabular}{l || l l l l}
 \multicolumn{1}{c}{} &  \multicolumn{4}{l}{$r=3$, piecewise cubic polynomials} \\
 \hline
$\Delta x$ & $E_{\mathcal{M}}$ & $E_{\mathcal{I}}$ & $E_{\mathcal{H}}$ & $E_{\mathcal{E}}$ \\
 \hline
$0.1$ & $3.1974\times 10^{-14}$ & $2.9088\times 10^{-14}$ & $2.6408\times 10^{-11}$  & $2.8866\times 10^{-15}$  \\
$0.05$ & $5.6399\times 10^{-14}$ & $5.4179\times 10^{-14}$  & $1.1535\times 10^{-13}$ & $3.7748\times 10^{-15}$ \\
$0.025$ & $1.4655\times 10^{-13}$ & $1.2945\times 10^{-13}$ & $5.9952\times 10^{-15}$ & $4.7740\times 10^{-15}$ \\
 \hline
\end{tabular}

\begin{tabular}{ l || l l l l }
\multicolumn{5}{l}{(b) Standard Galerkin} \\
\hline
 \multicolumn{1}{c}{} &  \multicolumn{4}{l}{$r=1$, piecewise linear polynomials}   \\
 \hline
$\Delta x$ & $E_{\mathcal{M}}$ & $E_{\mathcal{I}}$ & $E_{\mathcal{H}}$ & $E_{\mathcal{E}}$ \\
 \hline
$0.1$ & $1.1546\times 10^{-14}$  & $7.5495\times 10^{-15}$ & $1.7655\times 10^{-5}$ & $2.2332\times 10^{-5}$ \\
$0.05$ & $6.2617\times 10^{-14}$ & $5.2847\times 10^{-14}$ & $5.5643\times 10^{-7}$ & $7.0383\times 10^{-7}$ \\
$0.025$ & $9.6367\times 10^{-14}$ & $8.0602\times 10^{-14}$ & $1.7425\times 10^{-8}$ & $2.2046\times 10^{-8}$ \\
 \hline
\end{tabular}

\begin{tabular}{ l || l l l l}
 \multicolumn{1}{c}{} &   \multicolumn{3}{l}{$r=2$, piecewise quadratic polynomials} \\
 \hline
$\Delta x$ & $E_{\mathcal{M}}$ & $E_{\mathcal{I}}$ & $E_{\mathcal{H}}$ & $E_{\mathcal{E}}$ \\
 \hline
$0.1$    & $3.7303\times 10^{-14}$ & $3.3973\times 10^{-14}$ & $1.7782\times 10^{-5}$ &  $2.2493\times 10^{-5}$ \\
$0.05$   & $1.2745\times 10^{-13}$ & $1.1036\times 10^{-13}$ & $5.5743\times 10^{-7}$ &  $7.0509\times 10^{-7}$\\
$0.025$ & $3.2907\times 10^{-13}$ & $2.8866\times 10^{-13}$ & $1.7437\times 10^{-8}$ & $2.2056\times 10^{-8}$ \\
 \hline
\end{tabular}

\begin{tabular}{ l || l l l l}
 \multicolumn{1}{c}{} &  \multicolumn{3}{l}{$r=3$, piecewise cubic polynomials} \\
 \hline
$\Delta x$ & $E_{\mathcal{M}}$ & $E_{\mathcal{I}}$ & $E_{\mathcal{H}}$ & $E_{\mathcal{E}}$ \\
 \hline
$0.1$ & $8.8107\times 10^{-13}$ & $7.2564\times 10^{-13}$ & $1.7782\times 10^{-5}$ & $2.2493\times 10^{-5}$ \\
$0.05$ & $1.6684\times 10^{-12}$ & $1.3574\times 10^{-12}$ & $5.5743\times 10^{-7}$ & $7.0510\times 10^{-7}$ \\
$0.025$ & $3.5998\times 10^{-12}$ & $2.9947\times 10^{-12}$ & $1.7439\times 10^{-8}$ & $2.2059\times 10^{-8}$ \\
 \hline
\end{tabular}
\end{table}

In Figures \ref{fig:solwave0}(c) and (d) we observe that the tails generated by the two
methods are almost identical in the cases $r=1$ and $r=3$, and they have magnitude less than $10^{-5}$ for the same values
of $\Delta x$ and $\Delta t$ as before. In the case of $r=2$ the tails were different and of twice the magnitude compared to the corresponding tails in the cases of $r=1$ and $r=3$. Since the cases with even values of $r$ appear to be special cases that have not much to offer compared to the cases with odd values of $r$ we do not show details of the results here and we focus only on the cases $r=1$ and $r=3$.

Apparently, the conservative Galerkin finite element method
achieves better resolution for the propagation of a solitary wave even for the specific values of $\Delta x$ and $\Delta t$ compared to the non-conservative method by $T=1000$, as it is expected.
In particular, the magnitude of the trailing tails grows with time if the non-conservative method is applied, as shown in
Figure \ref{fig:sw22}. This phenomenon is caused mainly by the dissipative hump generated
at one end of the solitary pulse and in front of the rest of the dispersive
tails, and can be observed when non-conservative methods are used \cite{AD2001,DLM2003}. In order to observe the dissipative hump we ran a similar experiment as before where we
considered the propagation of the same solitary wave (translated initially at $x_0=-120$) in the interval $[-150,150]$.  Figure \ref{fig:sw33} shows magnifications to the trailing dispersive tails and the dissipative humps before they start interacting with the solitary pulse.
Although the dispersive tails in the conservative and non-conservative methods
are very similar, the dissipative hump can be observed only in the non-conservative
method. The dissipative hump always remains attached to the solitary pulse and
grows in length with time, dissipating the mass and the energy of the solitary
wave to the rest of the domain. 
\begin{figure}[ht!]
  \centering
  \includegraphics[width=\columnwidth]{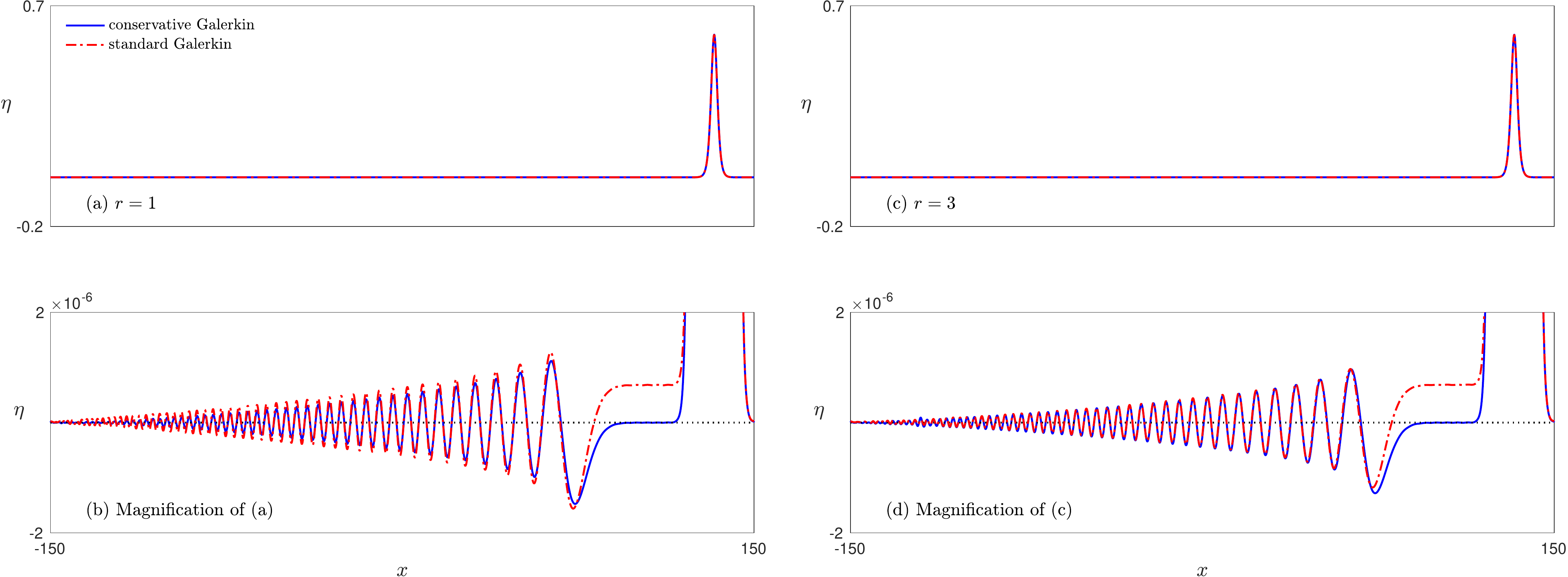}
  \caption{The propagation of a solitary wave with $c_s=\sqrt{1.6}$ and the formation of trailing tails in $[-150,150]$ at $T=200$, with  $\Delta x=\Delta t=0.1$.}
  \label{fig:sw33}
\end{figure}

It is worth mentioning that the accuracy of the derivative of the initial condition $\tilde{w}(x,0)$ is crucial for the accuracy in the propagation of the solitary waves. The magnitude of the tails indicates
analogous magnitude of errors in amplitude and shape. These errors are
$zero$ initially as we compare with the initial condition (translated by $c_s t$
units) and are increasing until they reach approximately a plateau where they
become oscillatory (at least in the conservative case). The phase error is always increasing due to the numerical error in the speed of the travelling wave. In Table \ref{tab:errorsind} we
present the mean values over the time interval $[80,100]$ of the amplitude,
phase and shape errors.

We observe that the errors are decreasing as we increase the number of elements
(decreasing $\Delta x$) and also as we increase the degree of the elements $r$.
In the case of piecewise linear elements the phase error remained of the same order
of magnitude when we chose different values of $\Delta x$ indicating that the
error in the phase speed is independent of the grid size. Finally, it is noted
that the errors are smaller with the conservative method compared to the
standard Galerkin method at least up to $T=1000$ with the exception of the case of quadratic Lagrange elements. The situation for $r=2$ is similar but the trailing tails are larger than the expected ones and thus the errors are also larger. This is perhaps again due to the choice of the initial condition.
On the other hand, the orders of magnitude of the errors are the same for the two
methods, indicating similarity in the propagation of travelling waves for small time scales.

In all the experiments conserning the propagation of a classical solitary wave we
considered the conservation of the invariants $\mathcal{M}$, $\mathcal{I}$,
$\mathcal{H}$ and $\mathcal{E}$. Table \ref{tab:errorsinvars} shows the errors
$E_\mathcal{M}$, $E_\mathcal{I}$, $E_\mathcal{H}$ and $E_\mathcal{E}$ defined
as $E_\mathcal{K}:=\max_{n}|\mathcal{K}(t^n)-\mathcal{K}(t^0)|$, where $\mathcal{K}$ is
one of  $\mathcal{M}$,  $\mathcal{I}$, $\mathcal{H}$, $\mathcal{E}$. We observe
that the conservative method conserves successfully the expected
conserved quantities. The standard Galerkin method obviously does not conserve the total energy
$\mathcal{E}$ although the errors in the computed energy are small. It is
interesting, that although the quantity $\mathcal{H}$ is conserved by the standard
Galerkin method but not by the conservative Galerkin method, the Runge--Kutta method does not
preserve it and thus it is not preserved by either of the two 
fully-discrete methods. The approximations obtained by the conservative
Galerkin method  are more accurate, while in the case of cubic Lagrange polynomials
the accuracy of this particular quantity is as if it were conserved by the numerical
method. As we shall see soon this was a coincidence.
A combination of the standard Galerkin method with an appropriate relaxation Runge--Kutta 
method could result in a fully-discrete scheme conserving the quantity
$\mathcal{H}$ when we consider periodic boundary conditions. Since the particular
quantity cannot be used in the case of other boundary conditions we refrain from
using such a combination.

One of the advantages of the conservative  method is the conservation of
the invariants for simulations over long time intervals. Letting the solitary wave
travel until $T=1000$ using the interval $[-20,20]$ we recorded the various errors and invariants. The errors
of the conservative  method are compared against the corresponding errors of
the standard Galerkin method in Figure \ref{fig:longt1} where the amplitude,
phase and shape errors are presented in the case of cubic Lagrange elements until
$T=1000$.  After an initial increase the amplitude and shape errors are stabilised
while the phase error increases linearly with time in the case of the conservative
Galerkin method in contrast with the standard Galerkin method, where the increase seems to be
at least of quadratic order. The errors are in general oscillatory, and in the
case of linear elements the errors are highly oscillatory and averaging leads to the same observations. A similar behaviour has been observed in the study of the
propagation of a solitary wave governed by the KdV equation using a conservative
discontinuous Galerkin method \cite{BCKX2013}. 

 \begin{figure}[ht!]
  \centering
  \includegraphics[width=\columnwidth]{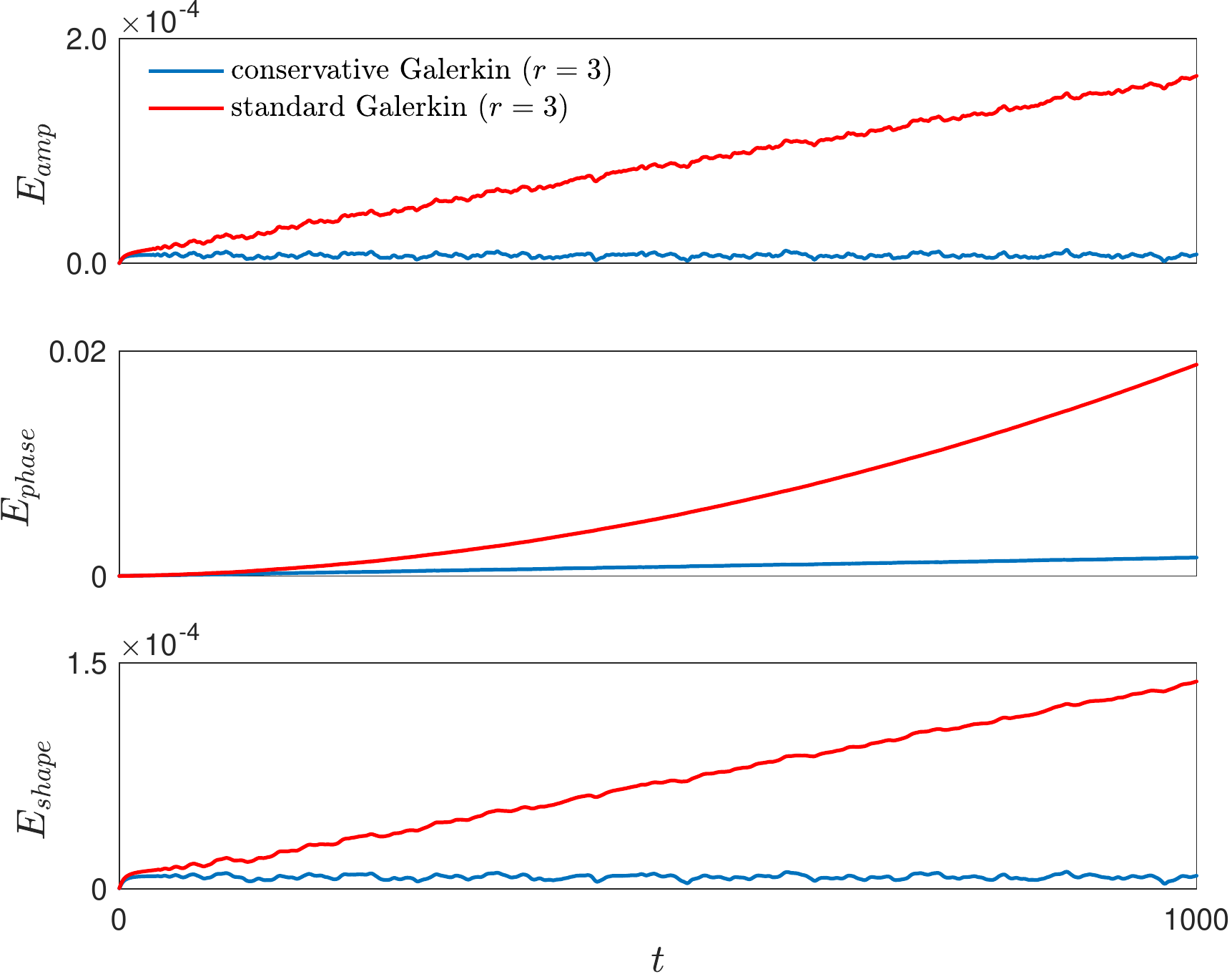}
  \caption{Amplitude, phase and shape errors for the propagation of a solitary wave with $c_s=\sqrt{1.6}$ up to $T=1000$, with $\Delta x=\Delta t= 0.1$.}
  \label{fig:longt1}
\end{figure}

\begin{table}[ht!]
\caption{Conservation of invariants $\mathcal{M}$, $\mathcal{I}$, $\mathcal{H}$, $\mathcal{E}$  in the propagation of a classical solitary wave with speed $c_s=\sqrt{1.6}$, ($\Delta x=\Delta t=0.1$, $T=1000$)}
\label{tab:longtimeerrs}
\begin{tabular}{l || l l l l}
 \hline
Method & $E_{\mathcal{M}}$ & $E_{\mathcal{I}}$ & $E_{\mathcal{H}}$ & $E_{\mathcal{E}}$ \\
 \hline
Conservative Galerkin ($r=1$) & $1.2879\times 10^{-14}$ & $8.6597\times 10^{-15}$ & $2.3305\times 10^{-9}$ & $2.9976\times 10^{-15}$ \\
Standard Galerkin ($r=1$) & $1.4655\times 10^{-14}$ & $8.6597\times 10^{-15}$ & $1.7631\times 10^{-4}$ & $2.2301\times 10^{-4}$ \\
Conservative Galerkin ($r=3$) & $2.7534\times 10^{-13}$ & $2.4225\times 10^{-13}$ & $3.0492\times 10^{-11}$ & $3.4417\times 10^{-15}$ \\
Standard Galerkin ($r=3$) & $8.4475\times 10^{-12}$ & $6.8885\times 10^{-12}$ & $1.7758\times 10^{-4}$ & $2.2462\times 10^{-4}$ \\
 \hline
\end{tabular}
\end{table}

The conservation of the quantities $\mathcal{M}, \mathcal{I}, \mathcal{H}$ and $\mathcal{E}$ is very similar across the various experiments. The corresponding errors are presented in Table \ref{tab:errorsinvars} with some differences shown in Table \ref{tab:longtimeerrs}. The conserved quantities remain almost the same with the exception of loosing 1 digit in some cases. Therefore, the conservative method can preserve the amplitude and shape of solitary waves during their propagation over long time intervals. Note that the values of $\gamma^n$ for $r=1$ satisfied $5.472\times 10^{-6}<\gamma^n-1<5.473\times 10^{-6}$ while for $r=3$ the respective interval was $2.145\times 10^{-8}<\gamma^n-1<2.146\times 10^{-8}$. Thanks to the conservation of the energy, it is expected that the specific conservative method leads to accurate numerical results in more general situations related to the propagation of solitary waves. Such an example is the interaction of two solitary waves travelling in the same direction. The particular experiment is described in the next section.

\section{Numerical experiments}\label{sec:exper}

In this section we repeat well-known numerical experiments for nonlinear and
dispersive waves, and we study the conservation properties of the new scheme.
In particular, we considered the following cases: (i) the overtaking collision of
two solitary waves and (ii) the reflection of a solitary wave by a vertical wall.
The first experiment is challenging as it involves integration in a long temporal interval
(thus conservation of energy should be important) and also during the interaction
of solitary waves it is known that various small-amplitude structures can
develop, such as dispersive tails and N-shaped wavelets \cite{ADM2010}.

\subsection{Overtaking collision of two solitary waves}

We first consider the interaction of two solitary waves travelling in the
same direction. The two solitary waves, generated using the Petviashvili method
as it is described in Appendix \ref{sec:appendix}, have phase speeds $c_s=1.6$
and $1.4$, respectively. It is known that various small-amplitude dispersive tails
emerge after the interaction of solitary waves, which then travel in both directions, \cite{ADM2010}. The resolution of the grid with $\Delta x=0.1$ is adequate to capture the complete picture of the
interaction with high resolution and without polluting the solution with small
amplitude numerical artefacts.

\begin{figure}[h!]
  \centering
  \includegraphics[width=\columnwidth]{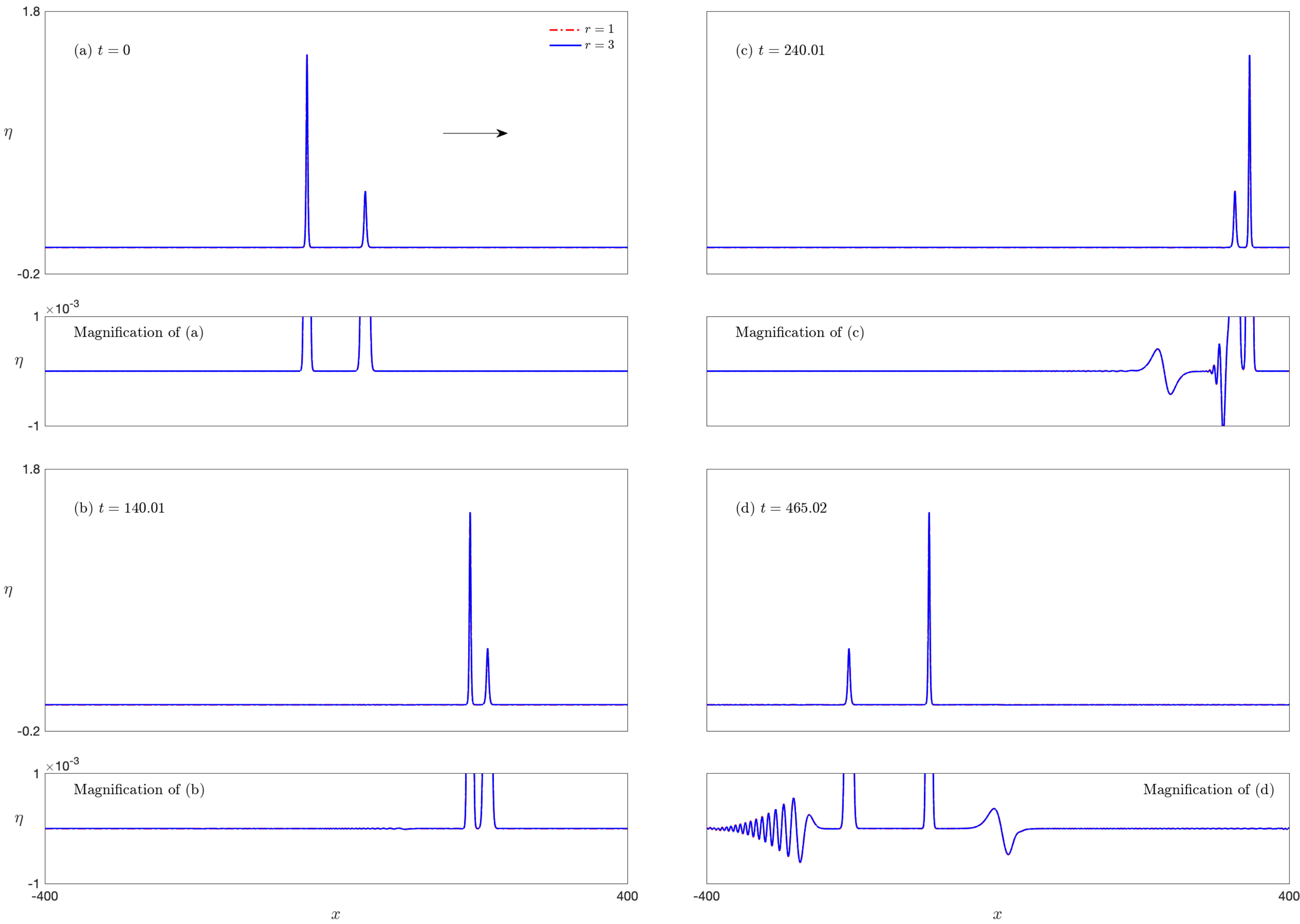}
  \caption{Overtaking collision of two solitary waves with speeds $c_s=1.6$ and $1.4$ and magnifications of trailing tails, subject to periodic boundary conditions, using the conservative Galerkin method.}
  \label{fig:ovcol1}
\end{figure}
Figure \ref{fig:ovcol1} presents snapshots before, during and after the
interaction of the two solitary waves at $t=0, 240.01, 140.01, 465.02$
(times rounded to two decimal digits) with Lagrange elements $\mathcal{P}^1$
and $\mathcal{P}^3$ ($\Delta x=\Delta t=0.1$).
We observe that the specific discretisation parameters were adequate to describe
all of the small-amplitude waves generated by the interaction of the two
travelling waves.
The interaction as approximated with the conservative Galerkin method with
linear elements appears to have acceptable resolution
and comparable to the case of cubic elements with the same values $\Delta x=\Delta t=0.1$.
It has been observed in \cite{ADM2010} that in the case of finite element spaces
with cubic splines the trailing tails are of negligible order of $10^{-10}$ for
$\Delta x=0.1$.

The values of the conserved quantities are shown in Table \ref{tab:invsover}.
The conserved quantities agree to almost all digits, except for the energy because the first derivative of the initial condition differs between $r=1$ and $r=3$. The errors in the conserved quantities are presented also
in Table \ref{tab:invsover}. It is easily observed that the errors of the conserved quantities in the case of $r=1$ have the same order as those with $r=3$.

\begin{table}[ht!]
\caption{Conserved invariants $\mathcal{M}$, $\mathcal{I}$, $\mathcal{H}$,
$\mathcal{E}$ and their errors, during the interaction of two solitary waves with $c_s=1.6$ and
$1.2$ respectively, ($\Delta x=\Delta t=0.1$, $T=600$). Periodic boundary conditions, conservative Galerkin method}
\label{tab:invsover}
\begin{tabular}{llllll}
\hline
$\mathcal{P}^r$ &  $\mathcal{M}$ & $\mathcal{I}$ & $\mathcal{H}$ & $\mathcal{E}$ \\
\hline
$r=1$ &  $5.587664655393$ & $4.776980337400$ & $3.711$ & $5.02174343002$  \\
$r=3$ &  $5.58766465539$ & $4.77698033740$ & $3.7115$ & $5.0217441007$ \\
\hline
$\mathcal{P}^r$ &  $E_{\mathcal{M}}$ & $E_{\mathcal{I}}$ & $E_{\mathcal{H}}$ & $E_{\mathcal{E}}$ \\
 \hline
$r=1$ & $7.7272\times 10^{-14}$ & $7.2831\times 10^{-14}$ & $2.0891\times 10^{-4}$ & $2.7445\times 10^{-12}$ \\
$r=3$ & $5.4623\times 10^{-13}$ & $4.3610\times 10^{-13}$ & $8.0327\times 10^{-5}$ & $2.3537\times 10^{-12}$ \\
\hline
\end{tabular}
\end{table}

In these experiments the values of $\gamma^n$ remained bounded in the interval $4.090\times 10^{-6} < \gamma^n-1<  4.205\times 10^{-5}$ with differences after the fourth decimal digit for both $r=1$ and $r=3$. It is important to note that the initial conditions are generated in $\mathcal{P}^3$ and then interpolated into the space $\mathcal{P}^1$. This observation is crucial because using different approximations to the initial conditions will lead to different errors.

\subsection{Solitary wave reflection by vertical wall}

Hitherto we have been concerned with the conservation properties of classical solitary waves in the
case of periodic boundary conditions. In this section we study the conservation
of mass and energy in the case of the reflection of a classical solitary wave
by a vertical wall. The quantity $\mathcal{I}$ is not preserved by the solution
of the BBM-BBM system in the case of reflective boundary conditions ($\eta_x=u=0$).
Because the reflection of a solitary wave by a vertical wall is equivalent to the
head-on collision of the same solitary wave with its symmetric image (propagating
in the opposite direction) about the wall, we expect that the conservation
properties will be very similar to the periodic case.

In this experiment we generate a right-travelling solitary wave with phase speed
$c_s=1.6$ in the interval $[-40,40]$. Because our numerical method is practically
a mixed Galerkin method, Neumann boundary conditions are essential and are enforced
explicitly. In both cases of $\mathcal{P}^r$ elements with $r=1$ and $r=3$ we use
$\Delta x=\Delta t=0.1$. Figure \ref{fig:reflx1} presents a graphical comparison
of the two methods for the reflection of the solitary wave. We observe that the
specific values we used for the discretisation parameters are adequate to
capture the reflection with high accuracy. The small artificial
tails generated due to the approximation of the initial conditions are negligible
compared to the tails generated because of the interaction of the travelling wave with
the vertical wall.
\begin{figure}[h!]
  \centering
  \includegraphics[width=\columnwidth]{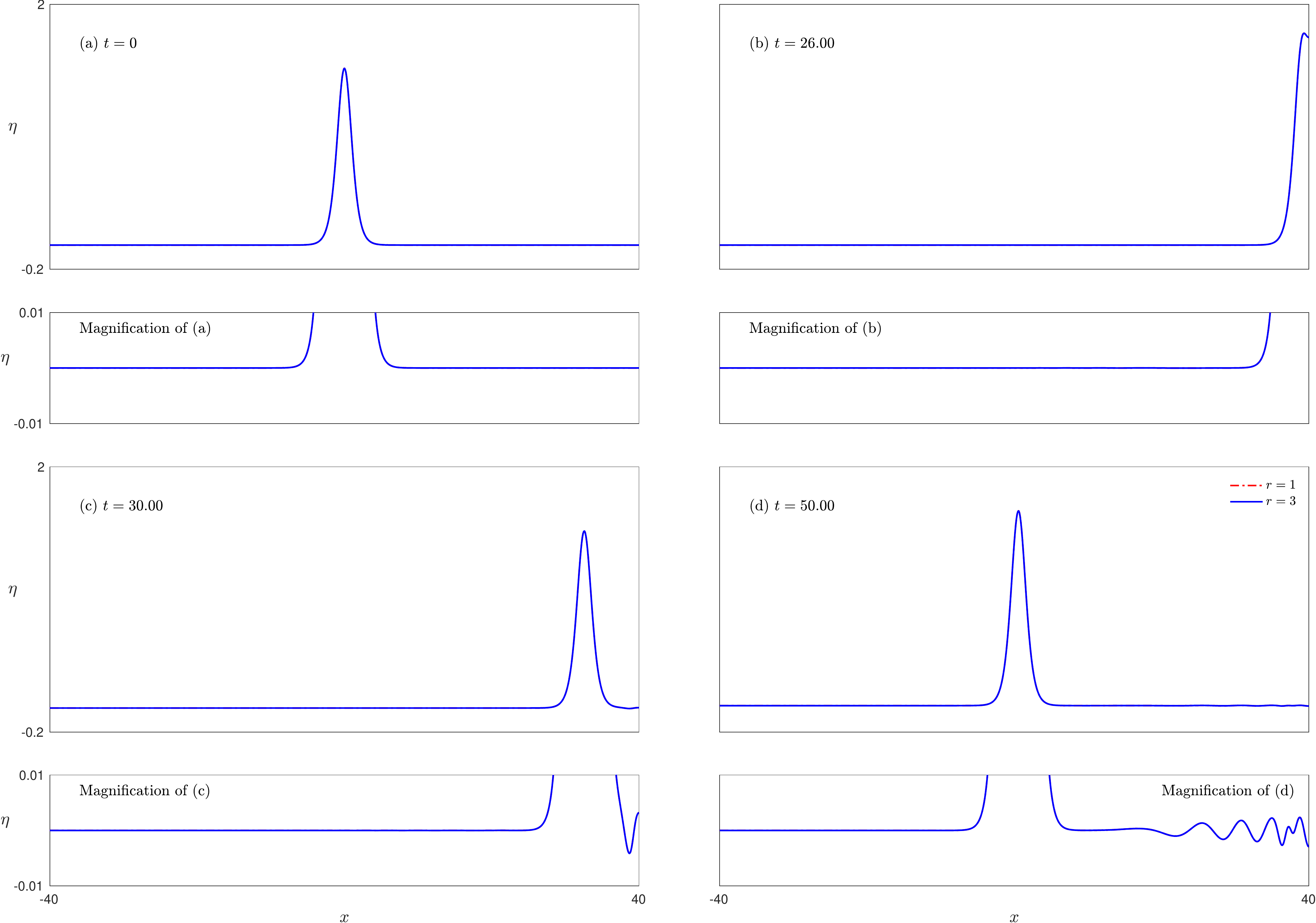}
  \caption{Reflection of a solitary wave with speed $c_s=1.6$ by a vertical wall and magnifications of trailing tails. Reflective boundary conditions}
  \label{fig:reflx1}
\end{figure}
\begin{table}[ht!]
\caption{Conserved invariants $\mathcal{M}$, $\mathcal{E}$ and their errors during the reflection of a solitary wave with speed $c_s=1.6$, ($\Delta x=\Delta t=0.1$, $T=50$). Reflective boundary conditions, conservative Galerkin method}
\label{tab:reflx1}
\begin{tabular}{lllllll}
\hline
$\mathcal{P}^r$ &   $\mathcal{M}$ & $\mathcal{E}$ & $E_{\mathcal{M}}$ & $E_{\mathcal{E}}$  \\
\hline
$r=1$ &  $3.8787933082344$ & $4.4967420062505$ & $8.8818\times 10^{-15}$ & $1.5987\times 10^{-14}$ \\
$r=3$ &  $3.8787933082344$ & $4.4967426642502$ & $3.8192\times 10^{-14}$ &  $1.5099\times 10^{-14}$ \\
\hline
\end{tabular}
\end{table}

The quantities $\mathcal{M}$ and $\mathcal{E}$ are presented in
Table \ref{tab:reflx1}. The quantities $\mathcal{I}$ and $\mathcal{H}$ are not
preserved even by the exact solution, and thus are not reported here.
The conserved quantities have similar accuracy as in the periodic case and the
reflective boundary conditions do not interfere with the conservation properties
of the method. The errors in these quantities are also presented in Table \ref{tab:reflx1}.

Note that for $r=1$ and $r=3$ the values of $\gamma^n$ where very similar and remained bounded in the same interval $6.447\times 10^{-6}<\gamma^n-1<6.346\times 10^{-5}$.

\section{Conclusions}\label{sec:conclu}

A new fully-discrete scheme for the regularised shallow water Boussinesq system of
equations was studied in the cases of periodic and reflective boundary conditions.
The new numerical method is based on a mixed Galerkin finite element semidiscretisation
in space and a relaxation Runge--Kutta method in time. Experimental studies of the
spatial convergence rates reveal an unusual even/odd dichotomy, where optimal rates
are observed only for piecewise polynomials of odd degree on spatially uniform grids. Due to the nature of the
mixed finite element method, superconvergence is observed for the approximation
in the $H^1$ norm. The new numerical method appears to have similar stability properties while achieving better
accuracy  compared to its non-conservative standard Galerkin counterpart.
The main advantage of the new method is the conservation of the total energy
functional and its potential for generalisation to multiple dimensions. Moreover, a study of the propagation of classical solitary waves shows that the numerical solitary waves can travel almost without changing shape over long time intervals by using the new conservative method. The dissipative hump that can be observed in the propagation of a solitary wave when nonconservative methods are used is not observed with the new conservative method. The new method performs very well even when we use low-order finite element methods with coarse grids. This makes
the conservative method desirable for long-time simulations and especially when good conservation properties are required by the
numerical scheme. The quality of the conservation of the total energy is not affected by the choice of the boundary conditions.

\appendix

\section{Petviashvili iteration}\label{sec:appendix}

Although Boussinesq systems of BBM-BBM type are known to possess classical
solitary waves \cite{BC1998,Chen1998}, there are no known closed-form solution formulas 
for such systems. For this reason computational methods are usually employed to generate solutions
numerically. In this paper we consider the Petviashvili method \cite{Petv1976}.
The Petviashvili method is a modified fixed point algorithm for solving nonlinear
equations, originally derived for computing travelling wave solutions. Assuming
that the solution of (\ref{eq:ibvp1}) is a travelling wave of speed $c_s>1$ which
satisfies the {\em ansatz}
\begin{align}\label{eq:ansatz}
\eta(x,t)=\eta(\xi),\qquad u(x,t)=u(\xi),
\end{align}
with $\xi=x-c_s t-x_0$, $x_0\in\mathbb{R}$, substitution in (\ref{eq:ibvp1}) implies that
\begin{align}\label{eq:ode1}
\begin{aligned}
&-c_s\left(\eta-\frac{1}{6}\eta_{\xi\xi}\right)+(1+\eta)u=0,\\
&-c_s\left(\eta-\frac{1}{6}u_{\xi\xi}\right)+\eta+\frac{1}{2}u^2=0.
\end{aligned}
\end{align}
System (\ref{eq:ode1}) is then written in the form
\begin{align}\label{eq:petv1}
\mathcal{L}\bw=\mathcal{N}(\bw),
\end{align}
where $\bw=(\eta,u)^\mathrm{T}$,
\begin{align}\label{eq:petv2}
\mathcal{L}=\begin{pmatrix}
c_s\left(1-\frac{1}{6}\partial_{\xi}^2\right) & 1\\
1 & c_s\left(1-\frac{1}{6}\partial_{\xi}^2\right)
\end{pmatrix}\quad \text{and}\quad
\mathcal{N}(\bw)=\begin{pmatrix}
\eta u\\
\frac{1}{2}u^2
\end{pmatrix}.
\end{align}
The Galerkin finite element method for solving (\ref{eq:petv1}), (\ref{eq:petv2}) is the following. Let $\bS_h$ be one of the spaces $\mathcal{P}^r_p\times \mathcal{P}^r_p$ or $\mathcal{P}^r\times \mathcal{P}^r_0$; seek an approximation $\bw_h=(\eta_h, u_h)^\mathrm{T}\in S_h$ such that
\begin{align}\label{eq:petv3}
\mathcal{L}_h(\bw_h,\bchi)=(\mathcal{N}(\bw_h),\bchi), \quad \text{for all}\quad \bchi\in \bS_h,
\end{align}
where
\begin{align}\label{eq:petv4}
\mathcal{L}_h (\bw,\bchi):=c_s(\eta,\phi)+\frac{1}{6}c_s(\eta_\xi,\phi_\xi)-(w,\phi)+c_s(w,\chi)+\frac{1}{6}c_s(w_\xi,\chi_\xi)-(\eta,\chi),
\end{align}
for all $\bw=(\eta,u)^\mathrm{T}\in \bS_h$ and $\bchi\in \bS_h$. Given an initial guess $\bw_h^0$ for $\bw_h$, the Petviashvili iteration for solving the nonlinear system of equations (\ref{eq:petv3}), (\ref{eq:petv4}) is then defined as
\begin{align}
\mathcal{L}_h(\bw_h^{n+1},\bchi)=M_n^\gamma (\mathcal{N}(\bw_h^n),\bchi),\quad \text{for all} \quad \bchi\in S_h,\quad n=0,1,\ldots,
\end{align}
where $M_n$ is defined as
$$M_n:=\frac{\mathcal{L}_h(\bw_h^n,\bw_h^n)}{(\mathcal{N}(\bw_h^n),\bw_h^n)}.$$
As an initial guess $\bw_h^0$ for the approximation of the solution $\bw_h$ we consider the $L^2$-projection of a solitary wave of the form $(\eta^0(\xi),u^0(\xi))^\mathrm{T}=(A\,\sech^2(\lambda\xi),\ c_s\eta^0(\xi)/(1+\eta^0(\xi))$, with $c_s:=\sqrt{1+A}$ and $\lambda:=\sqrt{3A/4}$. The exponent $\gamma$ can be any number in the interval $[1,3]$. As a stopping criterion for the Petviashvili iteration we took
$$R_n:=\frac{|\mathcal{L}_h(\bw_h^n,\bw_h^n)-(\mathcal{N}(\bw_h^n),\bw_h^n)|}{\|\bw_h^n\|_2}<\delta,$$ 
corresponding to the normalised residual $R_n$ in the $n$-th iteration 
falling below an appropriate prescribed tolerance $\delta$.
In all the experiments considered in this paper we took $\gamma=2$ and $\delta=10^{-10}$.
\begin{figure}[h!]
  \centering
  \includegraphics[width=0.8\columnwidth]{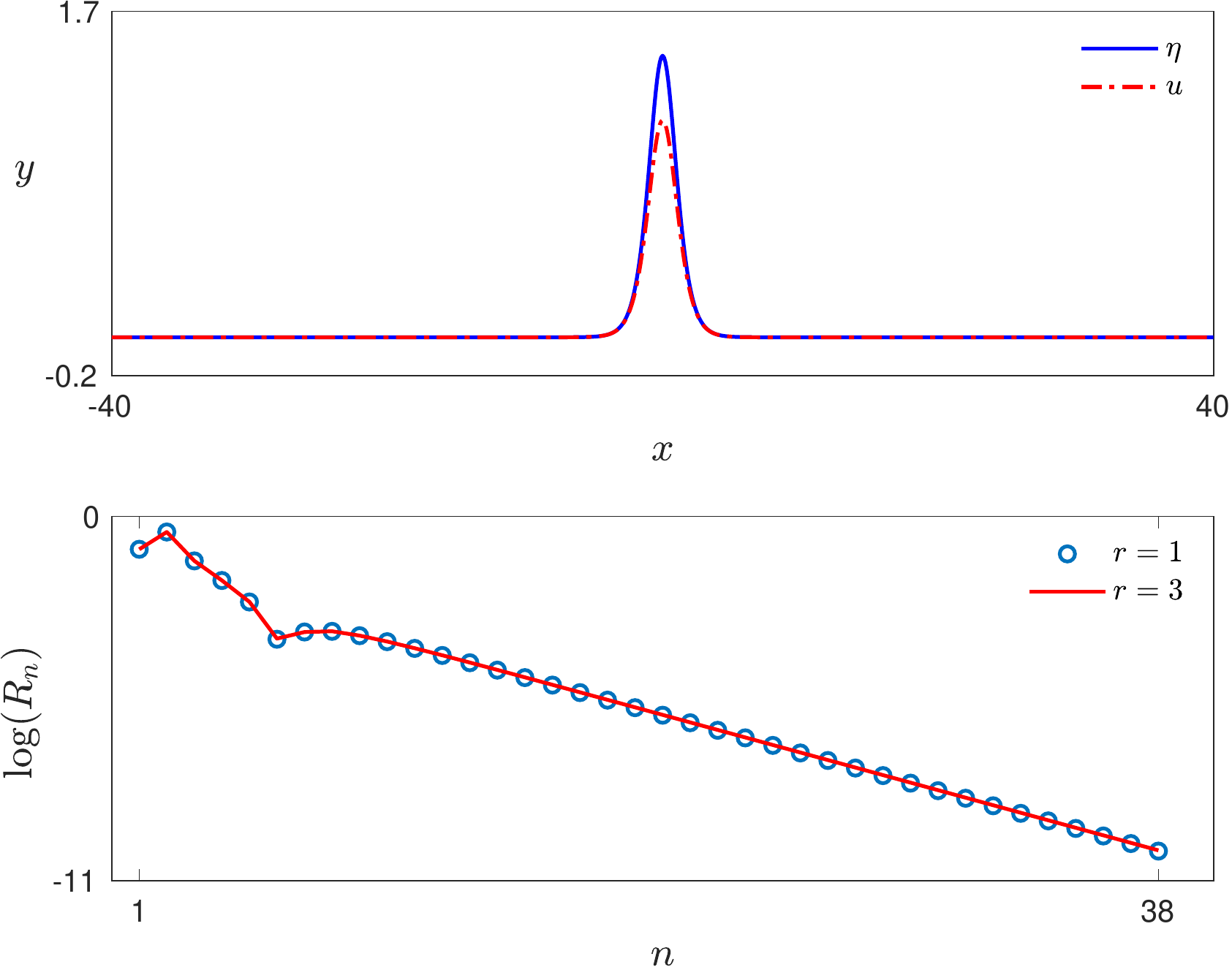}
  \caption{Generation of a solitary wave with speed $c_s=1.6$ using the Petviashvili method. Top panel: solution $(\eta, u)$; bottom panel: logarithm of the residual $R_n$}
  \label{fig:petvia1}
\end{figure}

We demonstrate the convergence of the Petviashvili method by generating a
solitary wave with phase speed $c_s=1.6$ in the interval $[-40,40]$ with both
periodic and reflective boundary conditions. Figure \ref{fig:petvia1} depicts
the solution generated and the logarithm of the residual \textit{versus} iteration number.
The results for cubic and linear elements coincide and also the different
boundary conditions had no effect on the convergence to the solution. In all
cases considered $38$ iterations were required to ensure that $|R_n|\le 10^{-10}$. On the other hand, although the convergence appears to be very similar for cubic and linear elements, including the residuals, the resolution of the numerical solution with cubic elements is better since the derivatives have been approximated with higher accuracy.

\bibliographystyle{plain}
\bibliography{biblio}
\bigskip

\end{document}